\documentclass{amsart}      
\usepackage{amssymb,amsmath,graphicx,enumerate,xcolor,url,mathrsfs,enumitem,hyperref}
\usepackage[all]{xy}
\newtoks\prt 
 
\newtheorem{thm}{Theorem}[section]
\newtheorem{ques}[thm]{Question} 
\newtheorem{lemma}[thm]{Lemma} 
\newtheorem{prop}[thm]{Proposition} 
\newtheorem{cor}[thm]{Corollary}

\theoremstyle{definition} 
 
\newtheorem{remark}[thm]{Remark}

\def\eqn#1$$#2$${\begin{equation}\label#1#2\end{equation}}

\def\F{\mathcal F}

\def\diam{\operatorname{diam}}

\def\en{\mathbb N} 
\def\er{\mathbb R} 
\def\zet{\mathbb Z}

\def \ext {\operatorname{ext}}

\def\span{\operatorname{span}} 
\def\sgn{\operatorname{sgn}}

\def \reg {\partial _{\kern1pt\text{reg}}} 
 
\def\iff{\Longleftrightarrow} 

\def\x{\boldsymbol{x}}

\def\Lip{\operatorname{Lip}}

\def\ip#1#2{\left\langle#1,#2\right\rangle}

\def\wde#1{\widetilde{\delta}\left(#1\right)}
\def\de#1{\delta\left(#1\right)}

\newcommand{\norm}[1]{\left\|#1\right\|}

\newcommand{\ca}[1]{\operatorname{ca}\left(#1\right)}
\newcommand{\wca}[2][]{\widetilde{\operatorname{ca}}_{#1}\left(#2\right)}

\newcommand{\abs}[1]{\left|#1\right|}
\newcommand{\setsep}{;\,}

\newcommand\restr[2]{{
  \left.\kern-\nulldelimiterspace 
  #1 
  \littletaller 
  \right|_{#2} 
  }}

\newcommand\absb[2]{\csname#1l\endcsname|#2\csname#1r\endcsname|}
\newcommand\normb[2]{\csname#1l\endcsname\|#2\csname#1r\endcsname\|}

\begin{document}

\title[Lipschitz-free spaces over unifromly discrete are 3-Schur]{Lipschitz-free spaces over uniformly discrete metric spaces are $3$-Schur}
\author[{M. C\' uth}]{Marek C\'uth}
\author[{O. Kalenda}]{Ond\v{r}ej F.K. Kalenda}

\email{cuth@karlin.mff.cuni.cz}
\email{kalenda@karlin.mff.cuni.cz}

\address{Department of Mathematical Analysis \\
Faculty of Mathematics and Physic\\ Charles University\\
Sokolovsk\'{a} 83, 186 \ 75\\Praha 8, Czech Republic}
\email{cuth@karlin.mff.cuni.cz}
\email{kalenda@karlin.mff.cuni.cz}

\subjclass[2010]{46B04; 51F30; 54E50 (Primary); 46B80; 46B20 (Secondary)}
\keywords{Lipschitz function; Lipschitz-free space; uniformly discrete space;  quantitative Schur property, 1-strong Schur property}


\begin{abstract} We prove that the Lipschitz-free space over any uniformly discrete metric space has the $3$-Schur property
\end{abstract}

\maketitle

\section{Introduction}

The study of Lipschitz-free Banach spaces has become an active area of research over the past two decades. These spaces arise in a variety of contexts and are known under several names, including Arens-Eells spaces and transportation cost spaces. One of their key features is that they provide a canonical way to associate a Banach space $\F(M)$ to a metric space $M$, allowing metric properties of $M$ to be translated into functional-analytic properties of $\F(M)$, and vice versa.

A notable result in this direction is a recent theorem of Aliaga et al.~\cite{p1u}, which states that the Lipschitz-free space $\F(M)$ over a complete metric space $M$ has the \emph{Schur property} (i.e., weak and norm convergences of sequences coincide) if and only if $M$ is \emph{purely $1$-unrectifiable} (that is, $M$ contains no bi-Lipschitz copy of a compact subset of the real line with positive Lebesgue measure, see \cite[Corollary 1.12]{p1u}). For further results relating properties of $\F(M)$ to those of the underlying metric space $M$, we refer the reader, for instance, to \cite{GPZ18, PZ18}.

Lipschitz-free spaces also provide examples of Banach spaces exhibiting previously unknown phenomena. For instance, in \cite[Example 8.3]{qschur-rim25}, the second author constructed a Lipschitz-free space that was the first known Banach space with the $1$-strong Schur property but not the $1$-Schur property. For additional constructions of Lipschitz-free spaces with novel properties, see, e.g., \cite{KV23}.

In the present paper, we provide a quantitative strengthening of the aforementioned result of \cite{p1u} for certain natural subclasses of purely $1$-unrectifiable metric spaces. To achieve this, we employ a quantitative version of the Schur property introduced in \cite{qschur}. This approach is part of the broader program of quantitative functional analysis, which, roughly speaking, aims to replace qualitative implications by more precise inequalities between appropriately defined quantities.

Let us recall basic quantities related to the Schur property.
Given a bounded sequence $(x_n)$ in a Banach space, we set
$$\ca{x_n}=\inf_{n\in\en} \diam\{x_k\setsep k\ge n\},$$
i.e., $\ca{x_n}$ is the oscillation of sequence $(x_n)$. Note that $\ca{x_n}=0$ if and only if $(x_n)$ is Cauchy
(hence convergent). Hence $\ca{x_n}$ measures how far is the sequence from being Cauchy.
For a bounded sequence $(x_n)$ we further set
$$\de{x_n}=\sup\{\ca{x^*(x_n)}\setsep x^*\in X^*,\norm{x^*}\le 1\}.$$
Then $\de{x_n}=0$ if and only if $(x_n)$ is weakly Cauchy, so this quantity measures how far the sequence is from being weakly Cauchy. Following \cite{qschur} we say that a Banach space $X$ is $c$-Schur for some $c\ge1$ if
$$\ca{x_n}\le c\de{x_n}\mbox{ for each bounded sequence }(x_n)\subset X.$$
In \cite{qschur} it is proved that the classical space $\ell_1$ is $1$-Schur and that there are some Schur spaces which are not quantitatively Schur. Moreover, in \cite{qschur-dp} it is proved that $X^*$ is $1$-Schur whenever $X$ is isometric to a subspace of $c_0(\Gamma)$ for a set $\Gamma$. This result was recently used to show (for a complete metric space $M$) that $\F(M)$ is $1$-Schur provided $M$ is purely $1$-unrectifiable and \emph{proper} (i.e., closed bounded sets are compact). This follows by combining \cite[Proposition 17]{petitjean-schur} and \cite[Theorem 3.2]{p1u}. We note that there is also another way to quantify the Schur property, inspired by the notion of $1$-strong Schur property from \cite{Go-Ka-Li}. For details and a precise relationship of these two ways of quantification, see
 Section~\ref{sec:qschur} below.
 
Recently, in \cite{qschur-rim25} the second author investigated the quantitative Schur property in Lipschitz-free spaces over uniformly discrete spaces. Note that a uniformly discrete metric space is obviously purely $1$-unrectifiable, so its Lipschitz-free space is Schur by \cite{p1u}. However, such a metric space is proper if and only if all bounded subsets are finite, which is very restrictive.
On the other hand, the structure of uniformly discrete spaces is quite simple, so it is natural to ask whether their Lipschitz-free spaces are automatically quantitatively Schur. In \cite{qschur-rim25} some counterexamples were given witnessing that the constant $1$ is in general unreachable. More precisely, there is a uniformly discrete metric space $M$ (made from a graph using the shortest-path metric) which is $3$-Schur and not $c$-Schur for any $c<3$. In the present paper we prove that the constant $3$ is optimal. More precisely, the main result is the following:

\begin{thm}\label{T:main}
    Let $M$ be a uniformly discrete metric space. Then $\F(M)$ is $3$-Schur. Moreover, the constant $3$ is optimal.
\end{thm}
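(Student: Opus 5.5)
The optimality of the constant $3$ is already settled by the example from \cite{qschur-rim25} recalled above: a uniformly discrete graph metric space $M$ for which $\F(M)$ is not $c$-Schur for any $c<3$. So the whole task is the upper estimate $\ca{x_n}\le 3\de{x_n}$ for every bounded sequence $(x_n)$ in $\F(M)$, where $M$ is $\theta$-uniformly discrete with base point $0$. The plan is to pass through the $1$-strong Schur type quantity described in Section~\ref{sec:qschur} and use the structure of $\F(M)$ for uniformly discrete $M$, which is a Schur space with a particularly simple dual.

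The first step is a reduction to finitely supported elements and to the case $\ca{x_n}>0$. By passing to a subsequence and perturbing by a norm-null sequence (which changes neither $\ca{\cdot}$ nor $\de{\cdot}$), I may assume each $x_n$ has finite support. Since $\F(M)$ is Schur, a further diagonal and gliding-hump argument should give the following: after a subsequence, $x_n=y+u_n+r_n$, where $y$ is finitely supported, $\norm{r_n}\to0$, and the $u_n$ are finitely supported with supports $S_n$ escaping every finite set. Then the oscillation is governed by the differences $u_n-u_m$ with disjoint supports, so it suffices to show the following. If $\norm{u_n-u_m}\ge a$ for $n\ne m$ in a subsequence, then $\de{u_n}\ge a/3$. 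In other words, for every $\varepsilon>0$ I must construct $f\in\Lip_0(M)$ with $\Lip f\le1$ such that $\langle f,u_{n_k}\rangle$ oscillates by at least $a/3-\varepsilon$ along a subsequence.

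The second step is the construction of $f$. For each $n$ I choose a norming $1$-Lipschitz $f_n$ for $u_n$ (more precisely for $u_n-u_{m}$). Because the supports are eventually far apart relative to the relevant distances, or at least disjoint, I would glue the functions on alternate indices. Set $g=\sum_{k} \pm f_{n_k}|_{S_{n_k}}$ on $\bigcup S_{n_k}$ and $g=0$ on a suitable finite set near $y$, then extend. The loss of Lipschitz constant comes from pairs of points $p\in S_{n_k}$ and $q\in S_{n_j}$. Using uniform discreteness, I would truncate each $f_n$ to $[-\tfrac{d}{2},\tfrac d2]$ relative to a centering constant, which explains a factor comparable to $3$: the contribution $|g(p)-g(q)|\le |f(p)-c_k|+|c_k-c_j|+|f(q)-c_j|$. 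Rescaling by $1/3$ then gives a $1$-Lipschitz function, and $\langle g/3,u_{n_k}\rangle$ alternates in sign with size about $a/3$. The key sub-lemma I would aim for is therefore the following. For $u\in\F(M)$ finitely supported with $\langle 1,u\rangle$-type mass condition handled by the base point, $\norm{u}$ is attained, up to $\varepsilon$, by a $1$-Lipschitz function whose oscillation on $\operatorname{supp}u$ is controlled, so that gluing across different supports costs at most the factor $3$.

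The main obstacle is exactly this gluing estimate across distinct supports. When points of $S_{n_k}$ and $S_{n_j}$ are close compared with the values of the $f$'s, a naive gluing blows up the Lipschitz constant. My first attempt is to use the McShane extension together with the inequality $d(p,q)\ge \max(d(p,0),d(q,0))-\ldots$ after passing to a subsequence where the sets $S_n$ have distances to $0$ either all bounded or tending to infinity, and treating these two regimes separately. In the bounded regime uniform discreteness makes the values of $f_n$ bounded by a fixed multiple of distances, and that is where the constant $3$ should emerge. If this fails, I would fall back on proving the $1$-strong Schur estimate with constant $3$ via $\ell_1$-sequences of disjointly supported molecules and invoke the comparison in Section~\ref{sec:qschur}.
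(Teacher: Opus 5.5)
There is a genuine gap, in two places.

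First, your reduction proves a weaker property than the one claimed. You cannot pass to subsequences while proving $\ca{x_n}\le 3\de{x_n}$, because $\ca{\cdot}$ may drop along subsequences. The statement you reduce to (pairwise $a$-separated $u_n$ satisfy $\de{u_n}\ge a/3$) amounts, by a Ramsey argument, to $\wca{x_n}\le 3\de{x_n}$, i.e.\ to the $3$-strong Schur property. By the proposition in Section~\ref{sec:qschur} this yields only $7$-Schur. The paper also points out that $c$-strong Schur does not imply $c$-Schur even for Lipschitz-free spaces \cite[Example 8.3]{qschur-rim25}, so your fallback has the same defect. What your reduction loses is the case where $\ca{x_n}$ is realized by pairs $x_{k_j},x_{l_j}$ from two different weak$^*$ clusters $p\ne q$. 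Then the relevant elements are $z_j\approx (p-q)+w_j$ with $w_j$ weak$^*$-null, and estimating $\norm{p-q+w_j}$ by $\norm{p-q}+\norm{w_j}$ loses the constant. The paper avoids this with the subsequence-stable reformulation of Lemma~\ref{l:c-schur-char}. After the gliding hump of Lemma~\ref{L:redukce}, it must then norm $\gamma_0+\gamma_n$ jointly, with a fixed finitely supported $\gamma_0$, so the norming functions have to coincide on $\operatorname{supp}\gamma_0$. Your alternating-sign gluing with $g=0$ near $y$ does not address this at all.

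Second, the core step, gluing norming functions of different pieces with loss at most $3$, is not supplied, and the mechanism you suggest does not work. No truncation or centering of the $f_n$ yields a ratio bound of $3$. Points $p\in S_{n_k}$ and $q\in S_{n_j}$ can be at distance $1$ while the norming functions differ there by nearly the diameter. Such points must be \emph{discarded}, and one must show that the discarded part carries little mass; this is the missing idea. In the paper, after reducing to bounded $M$ with integer-valued metric (Lemmas~\ref{l:bounded} and \ref{l:integer}), the proof runs as follows:
\begin{enumerate}[label=(\roman*)]
\item Take integer-valued norming functions (Lemma~\ref{lem:extremePoints}) and pigeonhole so that they agree on $F_0$ and share a common finite range.
\item Sort the bad pairs by their type $(u,v,w)$ with $\abs{u-v}>3w$.
\item Show that, for a fixed type, the sets of bad points in $F_n$ caused by $F_m$ are pairwise disjoint in $m$. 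A common point $z$ would produce $y\in F_{m_2}$ with $d(x_2,y)\le d(x_2,z)+d(z,x_1)+d(x_1,y)=3w$, while $\abs{f_{m_2}(x_2)-f_{m_2}(y)}=\abs{u-v}>3w$, which is impossible.
\item Disjointness, together with Lemma~\ref{l:izomorfismus}, makes the masses of these sets summable in $m$. Along a sparse subsequence they therefore carry total mass $<\varepsilon$.
\item Removing them leaves a $3$-Lipschitz function, which is then extended to $M$.
\end{enumerate}
This three-step triangle chain is where the constant $3$ comes from, and nothing in your proposal plays its role.
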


Let us note that our Theorem~\ref{T:main} contributes to the recent line of research focused on Banach space theoretical properties of Lipschitz-free spaces over uniformly discrete spaces (hereafter referred to as LFUD spaces). This line of research was initiated in \cite[Proposition 4.4]{kalton-collect04}, where N. Kalton observed that every LFUD space is Schur, it enjoys the Radon-Nikodym property, and the approximation property. One of the major open problems, originally posed by N. Kalton in \cite[p.~185]{kalton-collect04}, asks whether every LFUD space has the bounded approximation property (both positive and negative answer having interesting consequences, see \cite[Problem 18]{GLZ14} for further discussion). It is therefore natural to investigate more deeply the structural properties of LFUD spaces. Several recent results contribute to this direction; for instance, \cite[Corollary 3.7]{APS24} shows that every element of an LFUD space is a convex integral of molecules. Moreover, recent works such as \cite{AM26} and \cite{HM23} study Lipschitz-free spaces over nets in Banach spaces, which form an important subclass of LFUD spaces.

Theorem~\ref{T:main} will be proved in Section~\ref{sec:proof} below with help of several lemmata. The following problem remains open.

\begin{ques}
  Assume $M$ is a complete purely $1$-unrectifiable metric space. Is $\F(M)$ $3$-Schur?  
\end{ques}

We recall that $\F(M)$ is even $1$-Schur if $M$ is additionally proper. Our main results provides a positive answer for uniformly discrete spaces. But there are many complete purely $1$-unrectifiable spaces which are neither uniformly discrete nor proper. In particular, the following special cases remain open.

\begin{ques}
\begin{enumerate}[label=(\roman*)]
    \item Assume that $M$ is a complete discrete metric space (i.e., a complete space without isolated points,
    not necessarily uniformly discrete). Is $\F(M)$ $3$-Schur?
    \item Let $X$ be an infinite-dimensional Banach space (for example $X=\ell_1$, $X=c_0$ or $X=\ell_2$). Let $p\in (0,1)$. Define a new metric on $X$ by $d(x,y)=\norm{x-y}^p$, $x,y\in X$. Is $\F(X,d)$ $3$-Schur?
\end{enumerate}
\end{ques}

We complete our main result by a further special case.

\begin{thm}\label{t:0hyp}
    Let $M$ be a complete $0$-hyperbolic purely $1$-unrectifiable metric space. Then $\F(M)$ is $1$-Schur.
\end{thm}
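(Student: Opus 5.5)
Theorem~\ref{t:0hyp} will be reduced to the known $1$-Schur result for duals of subspaces of $c_0(\Gamma)$ from \cite{qschur-dp}, exactly as in the proper case, where \cite[Proposition 17]{petitjean-schur} and \cite[Theorem 3.2]{p1u} were combined. The key is the structure of $0$-hyperbolic spaces. A complete $0$-hyperbolic metric space isometrically embeds into a complete $\er$-tree $T$, and we may take $T$ to be the closed convex hull (the union of segments) of $M$. Since $M$ is purely $1$-unrectifiable, the intersection of $M$ with any segment of $T$ has one-dimensional Lebesgue measure zero. Indeed, a segment is isometric to an interval, so a compact subset of positive measure would be a bi-Lipschitz image of a compact subset of $\er$ of positive measure.

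The first step is to use the known description of $\F(T)$ for an $\er$-tree: $\F(T)$ is isometric to $L_1(\lambda_T)$, where $\lambda_T$ is the length measure on $T$ (Godard's theorem). Under this isometry, $\F(M)$ corresponds to the closed subspace of $L_1(\lambda_T)$ spanned by the indicators $1_{[0,x]}$ for $x\in M$, with $0\in M$ fixed as base point. Because $M\cap$ each segment is null, $T\setminus M$ is a countable or arbitrary union of open segments (the "gaps") together with a null set. For this reason $\F(M)$ should be isometric to an $\ell_1(\Gamma)$-sum of one-dimensional pieces. More precisely, I expect $\F(M)\equiv \ell_1(\Gamma)$, where $\Gamma$ indexes the maximal open arcs of $T\setminus M$ ("edges"), each weighted by its length. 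The claim is that every indicator $1_{[0,x]}$ for $x\in M$ is a.e. a disjoint union of whole edges, since $M$ itself carries no length. Conversely, each edge indicator, normalized, is a limit of differences $1_{[0,x]}-1_{[0,y]}$ with $x,y\in M$ the endpoints of that edge, or limits of such points.

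The second step is then immediate. Each $\ell_1(\Gamma)$ is isometric to $c_0(\Gamma)^*$, so it is $1$-Schur by \cite{qschur-dp}, or already by \cite{qschur}.

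The main obstacle is making the identification $\F(M)\equiv\ell_1(\Gamma)$ precise. Two points need care. First, the endpoints of a gap may fail to lie in $M$ but only in its closure; this is harmless since $M$ is complete, but branching points of $T$ need not lie in $M$. Second, we must verify that the closed span of $\{1_{[0,x]}\setsep x\in M\}$ is exactly the closed span of the normalized edge indicators. I would first prove that $\lambda_T$-almost every point of $T$ lies in some edge, using the Lebesgue null property of $M$ on each segment together with the fact that $T$ is the union of the segments $[0,x]$, $x\in M$. I would then handle branching points by noting that the segments from a branching point to points of $M$ decompose into edges lying between consecutive points of $M$ along geodesics. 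If the full isometry turns out to be delicate, a fallback is to show only that $\F(M)$ is isometric to a subspace of $L_1$ spanned by disjointly supported functions, that is, to an $\ell_1(\Gamma)$. This weaker statement suffices, since $1$-Schur passes to subspaces.
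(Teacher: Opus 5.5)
There is a genuine gap. Your first step is correct, and it is also how the paper starts: $M$ embeds into an $\er$-tree $T$, and pure $1$-unrectifiability forces $M$ to have length measure zero in $T$. The problem is the next claim, that this gives $\F(M)\equiv\ell_1(\Gamma)$ via an ``edge'' decomposition. That claim is false whenever the convex hull of $M$ has branching points outside $M$.

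Take $M=\{a,b,c\}$ with all mutual distances equal to $2$. It sits in a tripod with legs of length $1$ and centre $o\notin M$. With base point $a$, Godard's isometry sends $\delta(b)\mapsto 1_{[a,b]}$ and $\delta(c)\mapsto 1_{[a,c]}$, so
\[
\norm{s\delta(b)+t\delta(c)}=\abs{s}+\abs{t}+\abs{s+t}.
\]
The unit ball of this norm is a hexagon, so $\F(M)\not\equiv\ell_1^2$. The maximal open arcs of $T\setminus M$ overlap on the legs. If you cut at $o$ to get disjoint legs, the leg indicators such as $1_{[a,o]}$ are not in the image of $\F(M)$. So your claim that each edge indicator is a limit of differences $1_{[0,x]}-1_{[0,y]}$ fails exactly at branching points outside $M$, and your proposed treatment of branching points does not repair it.

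Your fallback is an isometric embedding of $\F(M)$ into some $\ell_1(\Gamma)$. It holds for finite $M$, since the span is generated by finitely many simple functions. It fails in general, because branching points outside $M$ can be dense, leaving no edges at all.

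Let $D$ be the dyadic rationals in $[0,1]$, and let $M=D$ with $d(p,q)=2+\abs{p-q}$ for $p\neq q$. This is the set of tips of a comb with spine $[0,1]$ and a tooth of length $1$ at each $p\in D$. So $M$ is bounded, uniformly discrete and $0$-hyperbolic. Take base point $0$, weights $a_p>0$ with $\sum_p a_p=1$, and $\mu=\sum_{p\in D\setminus\{0,1\}}a_p\delta(p)$. Godard's isometry gives
\[
\norm{\mu+t\delta(1)}=1+\abs{t}+\abs{1+t}+\int_0^1\abs{h(u)+t}\,du,\qquad h(u)=\sum_{p\in D\setminus\{0,1\},\ p\ge u}a_p .
\]
Since $h$ is strictly decreasing, the second distributional derivative of this function of $t$ has a nonzero non-atomic part. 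For $x,y$ in a subspace of $\ell_1(\Gamma)$, however, the second derivative of $t\mapsto\sum_\gamma\abs{x_\gamma+ty_\gamma}$ is purely atomic. So no isometric route can work, even for uniformly discrete $M$.

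The missing ingredient is the almost isometric embedding the paper imports from \cite[Theorem 1.2]{APP21}. For separable $M$ of length measure zero in an $\er$-tree, $\F(M)$ is $(1+\varepsilon)$-isomorphic to a subspace of $\ell_1$ for every $\varepsilon>0$. This suffices: an embedding $J$ with $\norm{x}\le\norm{Jx}\le(1+\varepsilon)\norm{x}$ into a $1$-Schur space gives $\ca{x_n}\le(1+\varepsilon)\de{x_n}$, and one lets $\varepsilon\to0$. The non-separable case reduces to the separable one, because every separable subspace of $\F(M)$ lies in $\F(N)$ for some closed separable $N\subset M$.
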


We recall that a metric space $(M,d)$ is \emph{$0$-hyperbolic} if
$$\forall x,y,z,u\in M\colon d(x,y)+d(z,u)\le \max\{d(x,z)+d(y,u),d(x,u)+d(y,z)\}.$$
The proof is made by combining two known results and is given below in Section~\ref{sec:0hyp}.

A special subclass of $0$-hyperbolic spaces is formed by \emph{ultrametric} spaces. Recall that a metric space $(M,d)$ is ultrametric if it satisfies the following stronger version of triangle inequality:
$$\forall x,y,z\in M\colon d(x,z)\le \max\{d(x,y),d(y,z)\}.$$
It is easy to see that any ultrametric space is $0$-hyperbolic and it is essentially known that ultrametric spaces are purely $1$-unrectifiable (see Section~\ref{sec:0hyp} for more details), so we have the following corollary.

\begin{cor}\label{cor:ultrametric}
Let $M$ be an  ultrametric space. Then $\F(M)$ is $1$-Schur.    
\end{cor}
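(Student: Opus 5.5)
Corollary~\ref{cor:ultrametric} will follow from Theorem~\ref{t:0hyp} once three facts are in place: ultrametric spaces are $0$-hyperbolic, they are purely $1$-unrectifiable, and completeness can be assumed.

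\emph{Step 1: $0$-hyperbolicity.} Fix $x,y,z,u\in M$. In an ultrametric space every triangle is isosceles, with the two largest sides equal. Among the three pairings
$$d(x,y)+d(z,u),\qquad d(x,z)+d(y,u),\qquad d(x,u)+d(y,z),$$
the two largest should therefore coincide, which is exactly the four-point condition. The cleanest way to see this is a case analysis on which of the six distances is largest. Say $D=d(x,y)$ is maximal. Applying the ultrametric inequality to the triangles through $x,y$ gives $\max\{d(x,z),d(y,z)\}=D$ and $\max\{d(x,u),d(y,u)\}=D$. Comparing the sums then reduces to comparing $d(z,u)$ with the smaller of the remaining distances. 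This step is routine.

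\emph{Step 2: pure $1$-unrectifiability.} Suppose $K\subset\er$ is compact with $\lambda(K)>0$ and $f\colon K\to M$ is bi-Lipschitz. Then $\rho(s,t)=d(f(s),f(t))$ is an ultrametric on $K$ comparable to $|s-t|$. Take a Lebesgue density point $t_0\in K$ and points $s<t_0<t$ of $K$ with $|s-t_0|\approx|t-t_0|$. The ultrametric inequality gives
$$\rho(s,t)\le\max\{\rho(s,t_0),\rho(t_0,t)\}.$$
To obtain a contradiction I would instead use chains. For $a<b$ in $K$ with $[a,b]\cap K$ of nearly full measure, choose points $a=t_0<t_1<\dots<t_n=b$ in $K$ with small gaps. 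The ultrametric inequality gives $\rho(a,b)\le\max_i\rho(t_i,t_{i+1})\le L\max_i|t_i-t_{i+1}|$. Since $K$ has density close to $1$ near the density point, the gaps can be made much smaller than $b-a$. This contradicts $\rho(a,b)\ge L^{-1}(b-a)$. Alternatively, I would just cite the known fact, which the paper says is essentially known.

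\emph{Step 3: completeness.} Theorem~\ref{t:0hyp} assumes $M$ is complete. The completion $\widetilde M$ is again ultrametric, since the inequality passes to limits. Moreover $\F(M)=\F(\widetilde M)$ isometrically. Applying Steps 1 and 2 to $\widetilde M$ and then Theorem~\ref{t:0hyp} gives that $\F(M)$ is $1$-Schur.

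The main obstacle is Step 2, and specifically choosing the chain so that all its gaps are small relative to $b-a$. I would handle this via the density point: if the gaps were not small, $K\cap[a,b]$ would omit an interval of length comparable to $b-a$, contradicting density close to $1$.
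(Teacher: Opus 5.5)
Your proposal is correct and follows essentially the same route as the paper. Both pass to the completion, which is still ultrametric, and note $0$-hyperbolicity. Both prove pure $1$-unrectifiability by a chain argument in which the ultrametric inequality bounds $\rho$ of two far-apart points of $K\subset\er$ by the small gaps between consecutive points, and then apply Theorem~\ref{t:0hyp}. The only difference is cosmetic: you use the Lebesgue density theorem and need $a,b\in K$, which is easily arranged by taking the extreme points of $K$ in a small interval around a density point. The paper instead proves the density statement elementarily and picks one point of $K$ in each cell of an equidistant partition.
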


Let us note that one could also investigate quantitative estimates of other properties of Lipschitz-free spaces. For instance, a quantitative version of weak sequential compactness (WSC), known as $C$-WSC for $C>0$, has been introduced; see, e.g., \cite{wesecom, qschur} for its definition and basic properties. Since it is known that many Lipschitz-free spaces enjoy the WSC property (see, for example, \cite[Corollary 2.5]{ANPP2021}), it would be natural to ask which Lipschitz-free spaces are $C$-WSC for some $C \geq \tfrac{1}{2}$, and to determine the optimal constant whenever possible.\footnote{Note that Lipschitz-free spaces cannot be $C$-WSC for $C < \tfrac{1}{2}$, as this would imply reflexivity by \cite[Proposition 1.2]{qschur}. This contradicts the fact that $\ell_1$ embeds into every infinite-dimensional Lipschitz-free space, see \cite{cuth-doucha-w}.} This question appears to be of interest even in the case of uniformly discrete metric spaces. Indeed, by Theorem~\ref{T:main} and \cite[Proposition 1.1]{qschur}, Lipschitz-free spaces over uniformly discrete spaces are $3$-WSC, but it is not clear whether this constant is optimal. On the other hand, the optimal constant is known for Lipschitz-free spaces over proper purely 1-unrectifiable spaces: these spaces are $L$-embedded (see \cite[Section 6]{APS25}) and hence are $\tfrac{1}{2}$-WSC by \cite[Theorem 1]{wesecom}.

We close this introduction by recalling the basic definitions and results on Lipschitz-free spaces used below.

\subsection*{Preliminaries on Lipschitz-free spaces:} Given a metric space $(M,d)$ with a distinguished point $0\in M$, symbol $\Lip_0(M)$ denotes the spaces of all real-valued Lipschitz functions on $M$ vanishing at $0$ equipped with the norm given by 
\[\|f\|_{\Lip} = L(f) = \sup\Big\{\frac{\abs{f(x)-f(y)}}{d(x,y)}\setsep x\neq y\in M\Big\}.\]
Then $\Lip_0(M)$ is a Banach space. For each $x\in M$, let $\delta(x)$ denote the evaluation functional $f \mapsto f(x)$ on $\Lip_0(M)$. The mapping $x \mapsto \delta(x)$ defines an isometric embedding of $(M,d)$ into $\Lip_0(M)^*$. The Lipschitz-free space over $M$ is then defined by
\[
\F(M):=\overline{\span}\{\delta(x)\setsep x\in M\}\subset \Lip_0(M)^*.
\]
Now we recall some basic facts about Lipschitz-free spaces; for their proofs we refer the interested reader, for instance, to \cite[Section 2]{cuth-doucha-w}. The Lipschitz-free space is independent of the choice of the distinguished point $0\in M$; that is, Lipschitz-free spaces constructed with different distinguished points are linearly isometric.
Given $0\in N\subset M$, the natural mapping $\F(N)\ni\delta(x)\mapsto \delta(x)\in\F(M)$, $x\in N$ extends to an isometric embedding, so $\F(N)$ can be identified isometrically with 
\[
\overline{\operatorname{span}}\{\delta(x)\setsep x\in N\}\subset \F(M),
\]
and we will use this identification throughout without further mention. If $\widetilde{M}$ denotes the completion of $M$, then $\F(M)$ and $\F(\widetilde{M})$ are linearly isometric. Hence, we will often assume without loss of generality that $M$ is complete. The dual space $\F(M)^*$ is linearly isometric to $\Lip_0(M)$, with the duality given by
\[
\ip{f}{\delta(x)}:=f(x),\quad f\in\Lip_0(M), x\in M
\]
and extended uniquely from $\delta(M)$ to a continuous linear functional on $\F(M) = \overline{\span}\; \delta(M)$. On bounded subsets of $\Lip_0(M)$, the weak$^*$ topology coincides with the topology of pointwise convergence. Finally, for $C>0$, if $M$ and $M'$ are $C$-Lipschitz isomorphic metric spaces, then the Banach spaces $\F(M)$ and $\F(M')$ are $C$-isomorphic.\footnote{We say $f:M\to M'$ is $C$-Lipschitz isomorphism if $f$ is bijection and $\Lip(f)\cdot \Lip(f^{-1})\leq C$. If, in addition, $M$ and $M'$ are Banach spaces and $f$ is moreover linear, we say $f$ is $C$-isomorphism.}

\section{On quantification of the Schur property}\label{sec:qschur}

As recalled in the introduction, a Banach space $X$ is Schur if any weakly convergent sequence is norm convergent.
Further, $X$ is called $c$-Schur (where $c\ge 1$) if $\ca{x_n}\le c\de{x_n}$ for each bounded sequence $(x_n)\in X$. We will use the following characterization of the $c$-Schur property, which easily follows from \cite[Proposition 5.1$(i_c)\iff(ii_c)$]{qschur-rim25}.

\begin{lemma}\label{l:c-schur-char}
    Let $X$ be a Banach space and let $c\ge 1$. Then $X$ is $c$-Schur if and only if
    $$\limsup_n \norm{x_n}\le c \sup \{\limsup_n \abs{x^*(x_n)}\setsep x^*\in B_{X^*}\}$$
    for any bounded sequence $(x_n)$ in $X$.
\end{lemma}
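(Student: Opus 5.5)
We need to prove Lemma \ref{l:c-schur-char}: $X$ is $c$-Schur iff $\limsup_n\norm{x_n}\le c\sup_{x^*\in B_{X^*}}\limsup_n\abs{x^*(x_n)}$ for every bounded $(x_n)$. The plan is to compare the two sides of the $c$-Schur inequality with the two sides of this condition, in both directions, passing to subsequences and to difference sequences.

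\emph{From the condition to $c$-Schur.} Fix a bounded $(x_n)$ and $\varepsilon>0$. By the definition of $\ca{x_n}$, we can choose increasing indices $k_j<l_j<k_{j+1}$ with $\norm{x_{k_j}-x_{l_j}}>\ca{x_n}-\varepsilon$. Put $y_j=x_{k_j}-x_{l_j}$; then $\limsup_j\norm{y_j}\ge\ca{x_n}-\varepsilon$. For $x^*\in B_{X^*}$ the scalar sequence $x^*(x_n)$ satisfies $\limsup_j\abs{x^*(y_j)}\le\ca{x^*(x_n)}\le\de{x_n}$, since both indices tend to infinity. The condition applied to $(y_j)$ then gives $\ca{x_n}-\varepsilon\le c\,\de{x_n}$, and letting $\varepsilon\to0$ yields the $c$-Schur inequality.

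\emph{From $c$-Schur to the condition.} Let $(x_n)$ be bounded and set $s=\sup_{x^*\in B_{X^*}}\limsup_n\abs{x^*(x_n)}$. Pass to a subsequence with $\norm{x_n}\to\limsup\norm{x_n}=:a$; this does not increase $s$. The key device is to interleave the sequence with zeros: let $z_{2n}=x_n$ and $z_{2n+1}=0$. Then $\ca{z_n}\ge a$, because $\norm{x_n-0}\to a$ and both kinds of terms occur arbitrarily late. For each $x^*\in B_{X^*}$, the scalar sequence $x^*(z_n)$ alternates between $x^*(x_n)$ and $0$, so its oscillation is at most $2\limsup\abs{x^*(x_n)}$. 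This gives only $a\le 2cs$, so interleaving with zeros loses a factor $2$.

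To avoid that loss, I would instead use Rosenthal's $\ell_1$ dichotomy together with the Schur-type structure. If $(x_n)$ has a weakly Cauchy subsequence, then the $c$-Schur property (already with $\de{}=0$) makes it norm Cauchy, with limit $x$. Then $\norm{x}=a$, and choosing $x^*$ norming $x$ gives $s\ge a$. Otherwise, after passing to a subsequence, we may hope to arrange $\de{x_n}\le s$ while keeping $\ca{x_n}\ge a$. This is exactly the content of the cited \cite[Proposition 5.1]{qschur-rim25}, so I would invoke it rather than reprove it. For $x^*$ fixed, a further subsequence makes $x^*(x_n)$ converge to some value in $[-s,s]$, but we need a single subsequence that works for all $x^*$ simultaneously. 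This diagonal uniformity is the main obstacle, and it is precisely where I would rely on \cite[Proposition 5.1$(i_c)\iff(ii_c)$]{qschur-rim25}, checking that its condition $(ii_c)$ matches the displayed inequality.
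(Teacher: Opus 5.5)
Your argument that the displayed condition implies the $c$-Schur property is correct: the difference sequence $y_j=x_{k_j}-x_{l_j}$ satisfies $\limsup_j\norm{y_j}\ge\ca{x_n}-\varepsilon$ and $\limsup_j\abs{x^*(y_j)}\le\ca{x^*(x_n)}\le\de{x_n}$, and that is all that is needed.

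The converse has a genuine gap. For the case without weakly Cauchy subsequences, you propose a subsequence with $\de{x_{n_k}}\le s$ and $\ca{x_{n_k}}\ge a$, and you say this is the content of \cite[Proposition 5.1]{qschur-rim25}. Such a subsequence need not exist. For the unit vector basis $(e_n)$ of $\ell_1$ we have $a=s=1$. Yet $\de{e_{n_k}}=2$ for every subsequence: test with the functional equal to $(-1)^k$ at the coordinate $n_k$ and $0$ elsewhere. So the ``diagonal uniformity'' you identify as the obstacle is an impossibility, not a technicality, and the cited proposition cannot be providing it. The paper itself gives no argument beyond saying the lemma easily follows from \cite[Proposition 5.1$(i_c)\iff(ii_c)$]{qschur-rim25}. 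A bare citation would therefore match the paper, but the mechanism you describe is wrong, and Rosenthal's theorem is not needed at all.

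The missing idea is a small change to your interleaving trick: interleave with $-x_n$ instead of $0$. Let $(z_k)=(x_1,-x_1,x_2,-x_2,\dots)$.
Every tail of $(z_k)$ contains both $x_n$ and $-x_n$ for all large $n$, so $\ca{z_k}\ge 2\limsup_n\norm{x_n}=2a$.
On the other side, every bounded scalar sequence satisfies $\ca{t_k}\le 2\limsup_k\abs{t_k}$. Since $\limsup_k\abs{x^*(z_k)}=\limsup_n\abs{x^*(x_n)}\le s$ for $x^*\in B_{X^*}$, this gives $\de{z_k}\le 2s$.
The $c$-Schur inequality for $(z_k)$ then yields $2a\le c\cdot 2s$, i.e.\ $a\le cs$.
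With zeros you only got $\ca{z_k}\ge a$, which is where the factor $2$ was lost; with $-x_n$ both sides double. Combined with your first half, this gives a complete, elementary proof of the lemma without any recourse to \cite{qschur-rim25}.
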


In the literature also another quantification of the Schur property is used.  It is so-called $1$-strong Schur property introduced in \cite{Go-Ka-Li} and used also in \cite{p1u}. Its $c$-version was introduced in \cite{qschur-rim25} and it was compared there with the $c$-Schur property. So, we say that a Banach space $X$ has the \emph{$c$-strong Schur property} if for any $\delta\in(0,2]$ and any $\varepsilon>0$ any normalized $\delta$-separated sequence in $X$ admits a subsequence $(\frac{2c}{\delta}+\varepsilon)$-equivalent to the standard basis of $\ell_1$.

To formulate its relationship to the $c$-Schur property, we recall two more quantities. Given a bounded sequence $(x_n)$ in a Banach space, we set
$$\begin{aligned}
    \wca{x_n}&=\inf\{ \ca{x_{n_k}}\setsep (n_k)\mbox{ strictly increasing}\},\\
    \wde{x_n}&=\inf\{ \de{x_{n_k}}\setsep (n_k)\mbox{ strictly increasing}\}.
\end{aligned}$$

The following proposition follows from \cite[Proposition 5.1 and Proposition 5.6]{qschur-rim25}.

\begin{prop}
    Let $X$ be a real Banach space and let $c\ge1$.
    \begin{enumerate}[label=(\alph*)]
        \item The following assertions are equivalent:
        \begin{enumerate}[label=(\roman*)]
            \item $X$ has the $c$-strong Schur property;
            \item $\wca{x_n}\le c\de{x_n}$ for any bounded sequence $(x_n)$ in $X$;
            \item $\wca{x_n}\le c\wde{x_n}$ for any bounded sequence $(x_n)$ in $X$.
        \end{enumerate}
        \item If $X$ is $c$-Schur, it has also the $c$-strong Schur property.
        \item If $X$ has the $c$-strong Schur property, it is $(2c+1)$-Schur.
    \end{enumerate}
\end{prop}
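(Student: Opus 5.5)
The proof splits into the three parts. The elementary implications and the passage between subsequence quantities I would do by hand, and I would isolate the one genuinely analytic step, the $\ell_1$-extraction, as in \cite[Propositions 5.1 and 5.6]{qschur-rim25}.

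\emph{Part (a), easy equivalence (ii)$\iff$(iii).} Since $\wde{x_n}\le\de{x_n}$, (iii) implies (ii). Conversely, assume (ii) and let $(x_n)$ be bounded. Choose a subsequence $(x_{n_k})$ with $\de{x_{n_k}}<\wde{x_n}+\varepsilon$ and apply (ii) to it. Since a subsequence of a subsequence is a subsequence, we get
$$\wca{x_n}\le\wca{x_{n_k}}\le c\,\de{x_{n_k}}< c\,\wde{x_n}+c\varepsilon,$$
and then let $\varepsilon\to0$.

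\emph{Part (a), step (ii)$\Rightarrow$(i).} Let $(x_n)$ be normalized and $\delta$-separated. Then every subsequence $(y_k)$ satisfies $\ca{y_k}\ge\delta$, hence $\wca{y_k}\ge\delta$, and by (ii) also $\de{y_k}\ge\delta/c$. Thus every subsequence admits a functional of norm at most one whose values on it oscillate by about $\delta/c$. The key tool is then the quantitative Rosenthal-type lemma (Behrends; see \cite{qschur}): if every subsequence of a bounded sequence has $\delta(\cdot)\ge\eta$, then for each $\varepsilon>0$ some subsequence satisfies
$$\normb{Big}{\sum a_kx_{n_k}}\ge\Bigl(\frac{\eta}{2}-\varepsilon\Bigr)\sum\abs{a_k}.$$
With $\eta=\delta/c$, and since the upper estimate $\norm{\sum a_kx_{n_k}}\le\sum\abs{a_k}$ is automatic, this gives $(\frac{2c}{\delta}+\varepsilon')$-equivalence to the $\ell_1$ basis. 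I expect this lemma to be the main obstacle. Its proof goes through a Ramsey/diagonal argument: one finds real numbers $r<s$ with $s-r\approx\eta$, and a subsequence such that for every finite set of indices split into two blocks there is a single functional that is $\le r$ on one block and $\ge s$ on the other. Given coefficients, one compares the positive and the negative parts of the combination, which is where the factor $1/2$ arises.

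\emph{Part (a), step (i)$\Rightarrow$(ii).} Let $\gamma=\wca{x_n}>0$. By a diagonal argument, pass to a subsequence that is $(\gamma-\varepsilon)$-separated in every further subsequence and whose norms converge. After normalizing and applying (i), we obtain a subsequence with lower $\ell_1$-constant about $\gamma/(2c)$ times the radius. Applying Hahn–Banach to the alternating combinations $\sum_{k\in A}x_{n_k}-\sum_{k\in B}x_{n_k}$ then yields a norm-one functional whose values oscillate by about $\gamma/c$, which gives $\de{x_n}\ge\gamma/c$. The delicate point is the normalization: the sequence should first be translated by a suitable point (for instance, one of its terms) so that the radius is controlled by the separation.

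\emph{Parts (b) and (c).} Part (b) is immediate: $\ca{\cdot}\ge\wca{\cdot}$, so $c$-Schur gives $\wca{x_n}\le\ca{x_n}\le c\,\de{x_n}$, which is (ii). For (c), I would verify the criterion of Lemma~\ref{l:c-schur-char}. Let $s=\sup\{\limsup_n\abs{x^*(x_n)}\setsep x^*\in B_{X^*}\}$ and pass to a subsequence along which $\norm{x_n}$ converges to $\limsup_n\norm{x_n}$. Then interlace this subsequence with the sequence of zeros: the interlaced sequence $(z_j)$ satisfies $\de{z_j}\le s$, while each of its subsequences still contains both the zero terms and the terms $x_n$. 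Applying (ii) to $(z_j)$ and to the subsequence itself, together with the triangle inequality $\norm{x_n}\le\norm{x_n-x_m}+\norm{x_m}$, I would aim to obtain $\lim\norm{x_n}\le 2c\,s+s=(2c+1)s$. The term $2c\,s$ comes from the separated part via (a)(ii), and the extra $s$ bounds the weak limit part, using that $\norm{x}=\sup_{x^*}\abs{x^*(x)}$ for the limiting behaviour of almost-Cauchy subsequences.
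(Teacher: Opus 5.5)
Parts (a) and (b) are sound in outline and follow the route of the cited source: (ii)$\iff$(iii) and (b) are formal, and (ii)$\Rightarrow$(i) rests on Behrends' quantitative Rosenthal theorem with constant $\eta/2$. The genuine gap is in (c), where the interlacing with zeros fails for two reasons.

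\emph{The bound $\de{z_j}\le s$ is false.} For $x_n=(-1)^nx$ with $\norm{x}=1$ one has $s=1$ but $\de{z_j}=2$. In general you only get $\ca{x^*(z_j)}\le 2\limsup_n\abs{x^*(x_n)}$.

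\emph{Subsequences of $(z_j)$ need not contain both kinds of terms.} The subsequence of zeros is constant, so $\wca{z_j}=0$ for every such interlacing, and (ii) applied to $(z_j)$ carries no information.

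As a sanity check: if the interlacing worked as you describe, you would get $\lim\norm{x_n}\le cs$. That would mean $c$-strong Schur implies $c$-Schur, which is false by \cite[Example 8.3]{qschur-rim25}.

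No interlacing is needed. Pass to a subsequence with $\norm{x_n}\to L=\limsup_n\norm{x_n}$; this does not increase $s$. Since $\ca{x^*(x_n)}=\limsup_n x^*(x_n)-\liminf_n x^*(x_n)\le 2\limsup_n\abs{x^*(x_n)}$, you get $\de{x_n}\le 2s$. Then (a)(ii) gives $\wca{x_n}\le 2cs$. So for $\varepsilon>0$ there are a further subsequence $(x_{n_k})$ and $k_0$ with $\norm{x_{n_k}-x_{n_l}}<2cs+\varepsilon$ for $k,l\ge k_0$. Fix $k\ge k_0$ and $x^*\in B_{X^*}$ with $x^*(x_{n_k})=\norm{x_{n_k}}$. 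Then $x^*(x_{n_l})\ge\norm{x_{n_k}}-2cs-\varepsilon$ for all $l\ge k_0$, hence $s\ge\norm{x_{n_k}}-2cs-\varepsilon$. Letting $k\to\infty$ and $\varepsilon\to0$ yields $L\le(2c+1)s$, and Lemma~\ref{l:c-schur-char} finishes. So the factor $2$ in $2c$ comes from $\de{\cdot}\le 2s$, and the extra $s$ comes from a norming functional of one term of the almost-Cauchy subsequence.

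Two minor points in (i)$\Rightarrow$(ii):
\begin{enumerate}
\item The translation you call delicate is unnecessary. If $\norm{x_n}\to R$, the normalized sequence is about $\gamma/R$-separated, so (i) gives a lower $\ell_1$-constant of about $\gamma/(2cR)$. Multiplying back by $\norm{x_{n_k}}\approx R$ gives about $\gamma/(2c)$, independently of $R$.
\item The oscillating functional should be obtained differently. Define $\sum a_kx_{n_k}\mapsto\theta\sum(-1)^ka_k$ on the span, which has norm at most one by the lower estimate $\theta$, and extend it by Hahn--Banach. Norming a single alternating combination only controls an average of the values $x^*(x_{n_k})$, not each one.
\end{enumerate}
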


We stress that the previous proposition holds only for real Banach spaces. In the complex case the situation is
more complicated and some further changes of constants are necessary (see \cite[Proposition 5.6 and Example 5.9]{qschur-rim25}). Substantial part of the proof of the previous proposition uses quantitative version of Rosenthal's $\ell_1$-theorem (see \cite{behrends} and \cite[Section 3]{qschur-rim25}). We restrict ourselves to the real spaces because Lipschitz-free spaces are over the reals.

Further, in assertion $(c)$ of the previous proposition the constant $2c+1$ cannot be replaced by $c$. This is witnessed by \cite[Example 8.3]{qschur-rim25}, where a Banach space of the form $\F(M)$ is exhibited which has the $1$-strong Schur property, is $2$-Schur but not $c$-Schur for $c<2$. However, it is not clear, whether the constant $2c+1$ is optimal or whether it can be replaced by $2c$ or $c+1$.

The previous proposition also shows that the standard terminology is a bit misleading because the $c$-strong Schur property is in fact weaker than the $c$-Schur property. But we do not attempt to change the terminology.

\section{Auxilliary results}

In this section we collect some auxilliary results which will be used to prove the main result. 
The first lemma enables us to restrict ourselves to bounded spaces. It is formulated for general metric spaces.

\begin{lemma}\label{l:bounded}
    Let $(M,d)$ be a metric space and let $c\ge 1$. Then $\F(M)$ is $c$-Schur if and only $\F(A)$ is $c$-Schur for each bounded subset $A\subset M$.
\end{lemma}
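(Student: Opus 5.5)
The ``only if'' part is immediate. If $A\subset M$, then $\F(A)$ is isometrically a subspace of $\F(M)$ (choose the base point in $A$, which is harmless since the free space does not depend on it). The $c$-Schur property passes to subspaces: for a sequence in a subspace $Y\subset X$, $\ca{x_n}$ is the same whether computed in $Y$ or in $X$. Also, $\de{x_n}$ computed in $X$ is at most $\de{x_n}$ computed in $Y$, since restrictions of functionals in $B_{X^*}$ lie in $B_{Y^*}$. (Hahn--Banach in fact gives equality.)

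For the ``if'' part I will use the characterization from Lemma~\ref{l:c-schur-char}. Take a bounded sequence $(x_n)$ in $\F(M)$ and $\varepsilon>0$, and set
$$s=\sup\{\limsup_n\abs{f(x_n)}\setsep f\in B_{\Lip_0(M)}\}.$$
The plan is to replace $(x_n)$ by finitely supported elements and then run a diagonal-type argument showing that the ``mass'' of $x_n$ essentially lives in a bounded set.

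First reduction: finitely supported elements $\sum a_i\delta(p_i)$ are dense in $\F(M)$. Replacing $x_n$ by such $y_n$ with $\norm{x_n-y_n}<\varepsilon/n$ changes neither $\limsup\norm{\cdot}$ nor $s$. So assume each $x_n$ has finite support $S_n\subset M$. Fix the base point $0$ and let $B_R$ denote the ball of radius $R$ around $0$.

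The key step is a truncation. For $R>0$ I want an operator $T_R$ which is a norm-one projection-like map $\F(M)\to\F(B_{R'})$ (with $R'$ comparable to $R$) whose norm defect vanishes as $R\to\infty$ on any fixed element. There is a standard tool for this: Kalton's decomposition lemma, or the multiplication operators $f\mapsto f\cdot h_R$ with $h_R$ a radial cutoff equal to $1$ on $B_R$ and $0$ outside $B_{2R}$. Their preadjoints satisfy $\norm{T_R}\le 1+C\,\sup_{p\in B_{2R}} d(p,0)/R\le 3$, which is too lossy. So instead I would argue by contradiction with a gliding hump. Suppose $\limsup\norm{x_n}>c(s+\varepsilon)$. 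Using that each $x_n$ is finitely supported, I would choose a subsequence and radii $R_1<R_2<\dots$ growing fast, so that $x_{n_k}$ splits as $u_k+v_k$ with $u_k$ supported in $B_{R_k}$ and $v_k$ supported far away. Either the bounded parts $u_k$ (which live in a single bounded set, after refining radii if necessary) carry almost all the norm, or the far parts do.

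In the first case, apply $c$-Schurness of $\F(A)$ for $A=B_{R}$, together with norming functionals extended by McShane from $A$ to $M$. Since McShane extension preserves the Lipschitz constant, $s$ computed in $\F(A)$ does not exceed $s$ in $\F(M)$. In the second case, the far parts are disjointly supported at rapidly increasing distances, and one constructs a single $f\in B_{\Lip_0}$ by gluing together norming functions of the $v_k$ (each rescaled to vanish near $0$). This gives $s\ge\limsup\norm{v_k}$, which is already better than constant $c$.

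The main obstacle is making the splitting $x=u+v$ norm-controlled: the norm of a finitely supported element does not split additively across regions, because optimal transport may cross between the regions. I would handle this by choosing the radii so that the annuli between regions are very wide compared to the earlier supports. Mass crossing a wide annulus then costs proportionally large distance, which forces approximate additivity $\norm{x}\approx\norm{u}+\norm{v}$ up to $\varepsilon$, using a cutoff $h$ with tiny Lipschitz constant across the annulus.
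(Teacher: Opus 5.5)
Your ``only if'' part is fine. The ``if'' part has a genuine gap at exactly the step you flag as the main obstacle, and the remedy you propose does not work.

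Let $W_h$ denote the preadjoint of $f\mapsto hf$. Take a cutoff $h$ that is affine in $d(\cdot,0)$, equal to $1$ on $B_R$ and to $0$ outside $B_{R'}$. It has small Lipschitz constant $1/(R'-R)$, but the loss of the splitting $\mu\mapsto(W_h\mu,W_{1-h}\mu)$ is of order $\Lip(h)\cdot R'=R'/(R'-R)\ge 1$, however wide the annulus is. For example, take $M=[0,\infty)$, $R'\gg R$ and $\mu=(\delta(p)-\delta(q))/(p-q)$ with $q<p$ close to $R'/2$. Then $\norm{W_h\mu}+\norm{W_{1-h}\mu}=1+2q/(R'-R)\approx 2$, while $\norm{\mu}=1$. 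This is the same loss you had already rejected as ``too lossy''.

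Your heuristic that ``mass crossing a wide annulus costs a lot'' applies only when the support avoids the annulus, and that cannot be arranged. The supports of the $x_n$ may meet every annulus of ratio larger than $2$ over a range of scales growing with $n$ (think of small molecules at every dyadic scale). Moreover, in a general $\F(M)$ there is no notion of ``mass in an annulus'' to pigeonhole over unless you already have a norm-controlled decomposition along scales. The same missing ingredient is needed for two other claims: that the near parts ``live in a single bounded set after refining radii'', and that the far blocks are radially separated enough for your gluing in the second case.

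What is missing is an almost isometric decomposition along scales. For example, cutoffs that are affine in $\log d(\cdot,0)$ over a large ratio $r$ of radii split every $\mu$ with loss at most $2/\log r$. In the form used by the paper, this is Kalton's result \cite[Proposition 4.3]{kalton-collect04}: $\F(M)$ is $(1+\varepsilon)$-isomorphic to a subspace of $\bigl(\bigoplus_{k\in\zet}\F(A_k)\bigr)_{\ell_1}$, where $A_k=\{x\setsep d(x,0)\le 2^k\}$.

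Your dichotomy ``either the near parts or the far parts carry almost all the norm'' is also not exhaustive. In the mixed case you must combine two things: a functional coming from the $c$-Schur property of the bounded piece, with its sign fixed along a subsequence, and glued norming functions of the far blocks. That is exactly the argument showing that $\ell_1$-sums of $c$-Schur spaces are $c$-Schur \cite[Theorem 7.4]{qschur-rim25}.

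The paper simply invokes these two results. The $\ell_1$-sum is $c$-Schur, so $\F(M)$ is $c(1+\varepsilon)$-Schur for every $\varepsilon>0$, hence $c$-Schur. Your gliding-hump outline could be completed along these lines, but as written it lacks the decomposition on which everything rests.
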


\begin{proof}
    If $A\subset M$, then $\F(A)$ is isometric to a subspace of $\F(M)$. Since the $c$-Schur property is inherited by subspaces, the `only if' part follows.

    To prove the `if part', assume that $\F(A)$ is $c$-Schur for each bounded $A\subset M$. Let $0\in M$ be a distinguished point. For $k\in\zet$ let $A_k=\{x\in M\setsep d(x,0)\le 2^k\}$. Then $A_k$ is a bounded subset of $M$, hence $\F(A_k)$ is $c$-Schur for each $k$. Thus $Y=\left(\bigoplus_{k\in\zet} \F(A_k)\right)_{\ell_1}$ is $c$-Schur as well by \cite[Theorem 7.4]{qschur-rim25}. Given $\varepsilon>0$, by \cite[Proposition 4.3] {kalton-collect04} $\F(M)$ is $(1+\varepsilon)$-isometric to a subspace of $Y$. Now we easily conclude that $\F(M)$ is $c$-Schur.
\end{proof}

The next lemma concerns only uniformly discrete spaces and enables us to restrict ourselves to integer-valued metrics.

\begin{lemma} \label{l:integer}
    Let $(M,d)$ be a uniformly discrete metric space. Given $\varepsilon>0$ there is a metric space $(M',d')$ such that $d'$ attains only integer values together with a bijection $F:M\to M'$ with $L(F)L(F^{-1})<1+\varepsilon$.
\end{lemma}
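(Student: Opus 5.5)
The idea is to rescale the metric so that it is uniformly discrete with a large separation constant, and then round it up to the nearest integer; the rounding barely distorts distances that are large compared with $1$. Let $\theta>0$ be a separation constant, so $d(x,y)\ge\theta$ for all $x\ne y$ in $M$. Pick $N\in\en$, to be fixed later depending on $\varepsilon$. Define $\rho=\frac{N}{\theta}d$; then $\rho(x,y)\ge N$ for $x\ne y$. Set
$$d'(x,y)=\lceil \rho(x,y)\rceil,\qquad x,y\in M,$$
let $M'$ be the set $M$ equipped with $d'$, and let $F$ be the identity map viewed as a bijection $M\to M'$.

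The first step is to check that $d'$ is a metric. Symmetry holds trivially. Also $d'(x,y)=0$ if and only if $x=y$, since $\rho(x,y)\ge N\ge1$ for $x\ne y$. For the triangle inequality, use that $\lceil a\rceil\le\lceil b\rceil+\lceil c\rceil$ whenever $a\le b+c$: indeed $\lceil b\rceil+\lceil c\rceil$ is an integer that is at least $b+c\ge a$. Applying this with $a=\rho(x,z)$, $b=\rho(x,y)$, $c=\rho(y,z)$ gives the triangle inequality for $d'$.

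The second step is the distortion estimate. For $x\ne y$ we have
$$\rho(x,y)\le d'(x,y)<\rho(x,y)+1\le\Big(1+\tfrac1N\Big)\rho(x,y).$$
Hence $L(F)\le\frac N\theta\bigl(1+\frac1N\bigr)$ and $L(F^{-1})\le\frac\theta N$, so
$$L(F)\,L(F^{-1})\le 1+\tfrac1N.$$
Choosing $N>1/\varepsilon$ then gives $L(F)L(F^{-1})<1+\varepsilon$.

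There is essentially no obstacle here. The only points needing care are that the ceiling operation preserves the triangle inequality, and that the uniform separation is what turns the additive rounding error $+1$ into a multiplicative factor $1+\frac1N$.
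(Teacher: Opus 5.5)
Your proof is correct and is essentially the paper's argument: the paper also sets $d'(x,y)=\lceil c\,d(x,y)\rceil$ with $c$ large (your $c=N/\theta$) and uses $c\,d\le d'\le c\,d+1\le (c+\frac1\delta)d$ to get distortion $1+\frac{1}{c\delta}$. You additionally verify that $d'$ is a metric via the ceiling triangle inequality, which the paper only calls clear.
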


\begin{proof}
    $(M,d)$, being uniformly discrete, is $\delta$-separated for some $\delta>0$. Let $c>0$ be given. Define a new metric $d'$ on $M$ by $d'(x,y)=\lceil cd(x,y)\rceil$. It is clear that $d'$ is a metric on $M$. Moreover,
    $$ cd(x,y)\le d'(x,y)\le cd(x,y)+1\le (c+\tfrac1\delta) d(x,y).$$
    Now it is enough take $c$ so large such that $\frac{1}{c\delta}<\varepsilon$.
\end{proof}

As a consequence of these two lemmata we see that it is enough to prove the main result for bounded metric spaces with integer-valued metric. For such spaces the norm on $\F(M)$ may be expressed using integer-valued functions. This is the content of the following lemma.

\begin{lemma}
    \label{lem:extremePoints}
     Let $(M,d)$ be a metric space with integer-valued metric. Then
     \[
     \|\mu\| = \max \{\ip{\mu}{f}\colon f\in B_{\Lip_0(M)}\text{ and }f(M)\subset \zet\},\quad \mu\in \F(M).
     \]
\end{lemma}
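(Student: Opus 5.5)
It is enough to show that for each $\mu\in\F(M)$ some norming functional $f\in B_{\Lip_0(M)}$ is integer-valued. Everything rests on one reduction: every $1$-Lipschitz function can be replaced by an integer-valued $1$-Lipschitz function that is at least as good on $\mu$.

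First, the norm is attained. For $\mu\in\F(M)$ the supremum $\|\mu\|=\sup\{\ip{\mu}{f}\setsep f\in B_{\Lip_0(M)}\}$ is attained, because $B_{\Lip_0(M)}$ is weak$^*$ compact and $f\mapsto\ip{\mu}{f}$ is weak$^*$ continuous ($\mu$ being an element of the predual). So the set
$$K=\{f\in B_{\Lip_0(M)}\setsep \ip{\mu}{f}=\|\mu\|\}$$
is a nonempty weak$^*$ compact convex face of the ball. The goal is to find an integer-valued element of $K$. I see two routes.

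\emph{Approach A (rounding).} Take $f\in K$ and $t\in[0,1)$, and set $f_t(x)=\lfloor f(x)+t\rfloor-\lfloor t\rfloor$, so that $f_t(0)=0$ since $f(0)=0$.
\begin{itemize}
\item Each $f_t$ is $1$-Lipschitz. If $\abs{f(x)-f(y)}\le d(x,y)=n\in\zet$, then $\abs{\lfloor f(x)+t\rfloor-\lfloor f(y)+t\rfloor}\le n$, because two reals at distance at most $n$ have floors differing by at most $n$.
\item The average over $t$ recovers $f$. We have $\int_0^1\lfloor a+t\rfloor\,dt=a$ for every real $a$, hence $\int_0^1 f_t(x)\,dt=f(x)$ for each $x$.
\item Pass the integral through $\mu$. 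For $\mu$ finitely supported this gives $\int_0^1\ip{\mu}{f_t}\,dt=\ip{\mu}{f}=\|\mu\|$. For general $\mu$ it follows by density: the map $t\mapsto \ip{\mu}{f_t}$ is a uniform limit of measurable functions, since $\|f_t\|\le1$. Measurability is the technical point here; each $t\mapsto f_t(x)$ is a step function.
\item Conclude. Since $\ip{\mu}{f_t}\le\|\mu\|$ for every $t$ and the average equals $\|\mu\|$, we get $\ip{\mu}{f_t}=\|\mu\|$ for almost every $t$. Any such $t$ gives the required integer-valued maximizer.
\end{itemize}

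\emph{Approach B (extreme points).} Alternatively, take an extreme point of $K$, which exists by Krein--Milman; it is also an extreme point of $B_{\Lip_0(M)}$. One would then show that every extreme point of $B_{\Lip_0(M)}$ is integer-valued when $d$ is integer-valued. Given a non-integer-valued $f$, one perturbs the fractional levels: shift all points with fractional part in a small window up or down by $\pm\eta$. This keeps the function $1$-Lipschitz, because constraints $\abs{f(x)-f(y)}\le n$ with $n$ an integer can only be tight between points with equal fractional parts. It therefore writes $f$ as a midpoint of two distinct functions in the ball. The difficulty in this route is making $\eta$ uniform over infinitely many points, since fractional parts may accumulate.

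I would present Approach A, as it avoids the uniformity issue entirely. Its main obstacle is the measurability and the interchange of integral and duality for general (non-finitely-supported) $\mu$. If that becomes awkward, an alternative is to pick finitely many $t_j$ equally spaced with random offset and use the same averaging identity approximately, together with weak$^*$ compactness of the integer-valued functions in the ball. That set is closed under pointwise limits, so a limit argument yields an exact maximizer.
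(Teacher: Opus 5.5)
Your Approach A is correct and complete, and it takes a genuinely different route from the paper. The paper follows your Approach B: it passes to a norming extreme point $f$ of $B_{\Lip_0(M)}$. It does not perturb fractional levels, however. Instead it invokes Farmer's characterization of extreme points: for every $x\in M$ and $\varepsilon>0$ there is a chain $0=x_0,\dots,x_n=x$ with $\sum_i\bigl(d(x_i,x_{i-1})-\abs{f(x_i)-f(x_{i-1})}\bigr)\le\varepsilon$. Hence $f(x)$ lies within $\varepsilon$ of the integer $\sum_i \pm d(x_i,x_{i-1})$, which sidesteps exactly the uniformity problem you identified.

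Your randomized rounding $f_t=\lfloor f+t\rfloor$ needs no structure theory of extreme points. The steps are as follows:
\begin{enumerate}
\item Integrality of $d$ enters only through the observation that floors of two reals at distance at most $n\in\zet$ differ by at most $n$.
\item The identity $\int_0^1\lfloor a+t\rfloor\,dt=a$ is elementary.
\item Passing to general $\mu$ via norm-density of finitely supported elements handles both measurability and the interchange of integral and duality, thanks to the uniform bound $\abs{\ip{\mu-\mu_k}{f_t}}\le\norm{\mu-\mu_k}$.
\end{enumerate}

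The paper's route gives, as a by-product, the structural fact that every extreme point of $B_{\Lip_0(M)}$ is integer-valued. Your argument recovers this as well if wanted. Your averaging identity and a separation argument show that $B_{\Lip_0(M)}$ is the weak$^*$-closed convex hull of its integer-valued elements. That set is weak$^*$-closed, since weak$^*$ convergence on the ball is pointwise. Milman's converse to the Krein--Milman theorem then places all extreme points in it.

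The fallback with finitely many $t_j$ in your last paragraph is unnecessary.
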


\begin{proof} Since $\F(M)^*=\Lip_0(M)$, we have 
$$\|\mu\| = \max\{\ip{f}{\mu}\colon f\in \ext B_{\Lip_0(M)}\}, \quad \mu\in \F(M).$$
Hence, it suffices to prove that every $f\in \ext B_{\Lip_0(M)}$ attains only integer values. 
So, fix $f\in\ext B_{\Lip_0(M)}$. It follows from \cite[Theorem~1]{Fa94} that
$$\forall x\in M\;\forall\varepsilon>0\;\exists\, 0=x_0,\ldots,x_n=x\colon
\sum_{i=1}^n \Big(d(x_i,x_{i-1})-|f(x_i)-f(x_{i-1})|\Big) \leq \varepsilon.
$$
So, fix $x\in M$ and $\varepsilon>0$. Find a path $0=x_0,\ldots,x_n=x$ such that the above inequality holds. Denote 
$$s_i:=\sgn(f(x_i)-f(x_{i-1}))\mbox{ and }\varepsilon_i:=d(x_i,x_{i-1})-|f(x_i)-f(x_{i-1})|$$ for $i=1,\ldots,n$. Then for each $i$ we have $f(x_i) = s_id(x_i,x_{i-1}) - s_i\varepsilon_i + f(x_{i-1})$ and therefore, we inductively deduce that
\[
f(x) = f(x_n) = \sum_{i=1}^n s_id(x_i,x_{i-1}) - s_i\varepsilon_i,
\]
and since $\sum_{i=1}^n \varepsilon_i\leq \varepsilon$, we obtain that $f(x)$ is at the distance at most $\varepsilon$ from the integer $\sum_{i=1}^n s_id(x_i,x_{i-1})$. Since $\varepsilon>0$ was arbitrary, this proves that $d(f(x),\zet) = 0$ and therefore $f(x)\in\zet$. This completes the proof.
\end{proof}

The following easy lemma is a key tool which enables us to use in $\F(M)$ some technique from the space $\ell_1$.

\begin{lemma}\label{l:izomorfismus}
    Let $(M,d)$ be a pointed metric space which is bounded and uniformly discrete. Then $\F(M)$ is isomorphic to $\ell_1(M\setminus\{0\})$. More precisely, if $a,b>0$ are such that $a\le d(x,y)\le b$ whenever $x,y\in M$, $x\ne y$, then for each finite $F\subset M\setminus\{0\}$ and each choice $(\alpha_x)_x\in F$ of real numbers
    we have
    $$\frac a2\sum_{x\in F}\abs{\alpha_x}\le  \norm{\sum_{x\in F}\alpha_x\delta(x)}\le b \sum_{x\in F}\abs{\alpha_x}.$$
\end{lemma}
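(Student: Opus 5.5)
The statement has two inequalities, and I will prove each directly on a finite linear combination $\mu=\sum_{x\in F}\alpha_x\delta(x)$; the isomorphism with $\ell_1(M\setminus\{0\})$ then follows by density. Under this isomorphism $\delta(x)$ corresponds to the unit vector $e_x$. The span of the $\delta(x)$ is dense in $\F(M)$, and the finitely supported vectors are dense in $\ell_1$. So the two-sided estimate extends to an onto isomorphism.

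\emph{Upper bound.} This is just the triangle inequality: $\norm{\mu}\le\sum_{x\in F}\abs{\alpha_x}\norm{\delta(x)}$. Since the embedding is isometric and $0\notin F$, we have $\norm{\delta(x)}=d(x,0)\le b$.

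\emph{Lower bound.} I will test $\mu$ against a single well-chosen function in $\Lip_0(M)$, using the duality $\F(M)^*=\Lip_0(M)$. Split $F$ into $F^+=\{x\in F\setsec\alpha_x\ge 0\}$ and $F^-=F\setminus F^+$. Define $f(0)=0$, $f(x)=a/2$ for $x\in F^+$, $f(x)=-a/2$ for $x\in F^-$, and $f=0$ elsewhere. Then $f$ takes values in $[-a/2,a/2]$, so $\abs{f(x)-f(y)}\le a\le d(x,y)$ for all $x\ne y$. Hence $L(f)\le 1$, and
$$\norm{\mu}\ge\ip{f}{\mu}=\frac a2\sum_{x\in F}\abs{\alpha_x}.$$

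No step is a real obstacle. The only point needing care is the Lipschitz check, where the oscillation of $f$ must be at most $a$; that is exactly why the values are $\pm a/2$ and not $\pm a$. Boundedness, meaning $d(x,0)\le b$, is used only in the upper bound.
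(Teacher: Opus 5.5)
Your proof is correct and is essentially the paper's argument made explicit. The paper appeals to duality via the estimate $\frac1b\norm{f}_\infty\le L(f)\le\frac2a\norm{f}_\infty$ for $f\in\Lip_0(M)$. Your $\pm a/2$ sign function is exactly the instance of the right-hand inequality needed for the lower bound, and your triangle-inequality bound $\norm{\delta(x)}=d(x,0)\le b$ is the dual form of the left-hand one; note also that your lower estimate already gives linear independence of $\{\delta(x)\setsep x\in M\setminus\{0\}\}$, so $\delta(x)\mapsto e_x$ is well defined on the span before you extend it.
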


\begin{proof}
    This follows for example from \cite[Theorem 4.14]{AACD20}, which is a more general result for Lipschitz-free $p$-Banach spaces of bounded uniformly discrete quasimetric spaces. In our case (metric spaces and $p=1$) it follows easily by duality (cf. the proof of \cite[Proposition 8.2]{qschur-rim25}) as a function $f$ on $M$ satisfying $f(0)=0$ is Lipschitz if and only if it is bounded and, moreover, we clearly have
    $$\frac1b\norm{f}_\infty\le L(f)\le \frac2a\norm{f}_\infty.$$
\end{proof}

\section{Proof of Theorem~\ref{T:main}}\label{sec:proof}

In this section we provide a proof of our main result. It will be done in two steps. The first one is contained in the following lemma.

\begin{lemma}\label{L:pripaddisj}
    Let $N\in\en$. Assume $(M,d)$ is a pointed metric space such that $d$ attains only values $0,1,\dots,N$.
     Assume $(F_n)_{n=0}^\infty$ is a disjoint sequence of finite subsets of $M$. Let $(\gamma_n)_{n=0}^\infty$ be a bounded sequence in $\F(M)$ such that $\gamma_n$ is a linear combination of $\delta(x)$, $x\in F_n$. Assume that $\limsup_n \norm{\gamma_0+\gamma_n}>c>0$. Then there is a $3$-Lipschitz function $g\in\Lip_0(M)$ such that  $\limsup_n \ip{g}{\gamma_0+\gamma_n}>c$.
\end{lemma}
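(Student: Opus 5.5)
Pass to a subsequence with $\norm{\gamma_0+\gamma_n}>c$ for all $n\ge1$. By Lemma~\ref{lem:extremePoints}, for each $n$ pick an integer-valued $1$-Lipschitz $f_n$ with $f_n(0)=0$ and $\ip{f_n}{\gamma_0+\gamma_n}>c$. The goal is one $3$-Lipschitz $g$ that agrees with $f_n$ on $F_0\cup F_n$, up to a constant on $F_n$ that does not change the pairing, for infinitely many $n$.

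\textbf{Reductions.} Since $d$ takes values in $\{0,\dots,N\}$, each $f_n$ takes values in $\{-N,\dots,N\}$. There are finitely many such functions on the finite set $F_0\cup\{0\}$, so passing to a subsequence we may assume $f_n|_{F_0\cup\{0\}}=h$ for a fixed $h$. Next we control $\gamma_n$. Lemma~\ref{l:izomorfismus} applies to $M$ with $a=1$ and $b=N$, so $\F(M)$ is isomorphic to $\ell_1$. Hence $(\gamma_n)$ is bounded in $\ell_1$-coefficients with disjoint supports. We may add to $f_n$ any constant on $F_n$ only if $\gamma_n$ has total mass $0$. Otherwise the pairing $\ip{\text{const}}{\gamma_n}$ matters, so we keep $f_n$ as it is on $F_n$.

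\textbf{Construction.} Define $g=h$ on $F_0\cup\{0\}$ and $g=f_n$ on $F_n$ for $n$ in a suitably sparse subsequence $S$, then extend by McShane. The Lipschitz constant must be checked on three kinds of pairs.

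Within $F_0\cup F_n$ there is nothing to check: $g$ coincides with the $1$-Lipschitz $f_n$.

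For $x\in F_n$ and $y\in F_m$ with $n\ne m$, the key trick is to go through $0$ or through $F_0$. Since $f_n$ and $f_m$ agree with $h$ on $F_0\cup\{0\}$, for any $z\in F_0\cup\{0\}$,
$$|f_n(x)-f_m(y)|\le d(x,z)+d(z,y).$$
For this to give constant $3$, we need some $z$ with $d(x,z)+d(z,y)\le 3d(x,y)$. That is automatic if we could choose $z$ close to $x$ or $y$, which is the main obstacle. Since $d(x,y)\ge1$ and distances are at most $N$, a crude bound gives only constant $2N$.

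\textbf{Main obstacle and planned fix.} To get $3$, I would instead modify $f_n$ on $F_n$. Replace $f_n$ by its truncation
$$\tilde f_n(x)=\max\Bigl(\min\bigl(f_n(x),\ \min_{z\in F_0\cup\{0\}}(h(z)+d(x,z))\bigr),\ \max_z(h(z)-d(x,z))\Bigr),$$
which is already satisfied by a $1$-Lipschitz $f_n$, so nothing changes. The real freedom is a Ramsey/compactness step: the functions $x\mapsto f_n(x)$ are determined by finitely many data once we know the distance profile $(d(x,z))_{z\in F_0\cup\{0\}}$. Passing to a subsequence where the mass of $\gamma_n$ on each profile class, and the pattern of $f_n$ there, stabilizes up to $\varepsilon$ (finitely many classes, bounded masses), I would replace $f_n$ on $F_n$ by the value $\phi(\text{profile},\cdot)$ that is optimal for the limiting masses.

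It then remains to bound $|g(x)-g(y)|$ for cross pairs. Points with the same profile from different $F_n$ get values differing by at most $2$ (the two extremes differ by a bounded amount relative to $d\ge1$), and a short case analysis gives $|g(x)-g(y)|\le 1+d(x,y)+1\le 3d(x,y)$. Here $d(x,y)\ge1$ is used essentially, and the estimate is exactly where the constant $3$ comes from. The pairing loses only $\varepsilon$, which is absorbed by the strict inequality $>c$.
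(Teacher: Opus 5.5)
There is a genuine gap. Your proposal correctly identifies the difficulty, namely cross pairs $x\in F_m$, $y\in F_n$ with $m\ne n$. But the ``planned fix'' does not work, so the constant $3$ is never actually obtained.

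The assertion that $f_n|_{F_n}$ is determined by the distance profile $(d(x,z))_{z\in F_0\cup\{0\}}$ is false. The profile only confines $f_n(x)$ to the interval $[\max_z(h(z)-d(x,z)),\min_z(h(z)+d(x,z))]$, whose length can be as large as $2N$. Replacing $f_n$ on $F_n$ by a function of the profile destroys the norming. For instance, take $F_0=\emptyset$ and $\gamma_n=\delta(x_n)-\delta(y_n)$ with $d(x_n,0)=d(y_n,0)$. Any profile-dependent function pairs to $0$ with $\gamma_n$, while $\norm{\gamma_n}=d(x_n,y_n)\ge 1$.

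If the replacement is allowed to depend on more than the profile, your cross-pair estimate is unsupported. Nothing implies that same-profile points in different blocks get values differing by at most $2$; by the interval above they may differ by $2N$. Pairs with different profiles are not treated at all.

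A minor point: to absorb an $\varepsilon$-loss at the end you need a uniform margin. That means passing to a subsequence with $\norm{\gamma_0+\gamma_n}\to c'>c$, not merely $\norm{\gamma_0+\gamma_n}>c$ for each $n$.

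The missing idea is that you should not try to make the glued function $3$-Lipschitz on all of $\bigcup_n F_n$. It only needs to be $3$-Lipschitz after discarding from the later blocks a set of small $\ell_1$-mass. The paper proceeds as follows.

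\begin{enumerate}
\item It stabilizes $f_n|_{F_0}$ and the range $\{a,\dots,a+N\}$, then glues $f=f_n$ on $\{0\}\cup F_0\cup F_n$.
\item For each triple $\x=(u,v,w)$ with $\abs{u-v}>3w$, it considers the set $U_{m,n,\x}\subset F_m$ of points with $f_m$-value $u$ that have a partner $y\in F_n$ with $d(x,y)=w$ and $f_n(y)=v$. A diagonal argument makes $U_{m,n,\x}=U_{m,\x}$ independent of $n>m$. Let $V_{m,n,\x}\subset F_n$ be the set of corresponding partners.
\item The key point, and the only place where $3$ enters, is that $V_{m_1,n,\x}\cap V_{m_2,n,\x}=\emptyset$ for $m_1<m_2<n$. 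Suppose $z$ is a common point, with $x_i\in U_{m_i,\x}$ at distance $w$ from $z$. Since $U_{m_1,m_2,\x}=U_{m_1,\x}$, $x_1$ has a partner $y\in F_{m_2}$. Then $d(x_2,y)\le 3w<\abs{u-v}=\abs{f_{m_2}(x_2)-f_{m_2}(y)}$, contradicting that $f_{m_2}$ is $1$-Lipschitz.
\item Lemma~\ref{l:izomorfismus} bounds the $\ell_1$-norm of the coefficients of $\gamma_n$ uniformly. By the disjointness above, the limiting masses $\lim_n\sum_{x\in V_{m,n,\x}}\abs{\alpha_x}$ are therefore summable in $m$. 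So along a sparse subsequence $(k_j)$, the sets $V_{k_i,k_j,\x}$ with $i<j$ carry total mass $O(\varepsilon)$.
\item Removing these sets leaves a set $H$ on which $f$ is $3$-Lipschitz. A $3$-Lipschitz extension $g$ of $f|_H$ loses at most $4N$ times the removed mass in the pairing.
\end{enumerate}

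The $\ell_1$-bound you record in your reductions is exactly the right tool. However, it must be applied to these ``bad partner'' sets. Profile classes relative to $F_0\cup\{0\}$ carry no information about how different blocks $F_m$ and $F_n$ interact.
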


\begin{proof}
    Up to passing to a subsequence we may assume that $\lim_n \norm{\gamma_0+\gamma_n}=c^\prime>c$ and that
    $\norm{\gamma_0+\gamma_n}<c^\prime+1$ for each $n\in\en$.

    By Lemma~\ref{lem:extremePoints} we may find, for each $n\in\en$, a $1$-Lipschitz integer-valued function $f_n\in\Lip_0(\{0\}\cup F_0\cup F_n)$ such that $\ip{f_n}{\gamma_0+\gamma_n}=\norm{\gamma_0+\gamma_n}$. 

Since $f_n$ is $1$-Lipschitz, necessarily $\norm{f_n}_\infty\le N$. Since $F_0$ is finite, there are only finitely many integer-valued $1$-Lipschitz functions on $F_0$. Thus, up to passing to a subsequence, we may assume that $f_n|_{F_0}=f_m|_{F_0}$ for any choice $m,n\in\en$.

Note that $f_n(\{0\}\cup F_0\cup F_n)$ contains $0$ and has diameter at most $N$. Hence, it is contained in a
set of the form $\{a_n,a_n+1,\dots,a_n+N\}$ for some $a_n\in\{-N,\dots,0\}$. Up to passing to a subsequence we may assume that the sequence $(a_n)$ is constant. So, the range of each $f_n$ is contained in $\{a,\dots,a+N\}$.

Now we may glue functions $f_n$ to one function $f:\{0\}\cup\bigcup_{n=0}^\infty F_n\to\er$ by setting $f(0)=0$, $f=f_n$ on $F_0\cup F_n$. By the above it is a well-defined function. We proceed by analyzing the Lipschitz constant of $f$. By construction, $f$ is $1$-Lipschitz on $\{0\}\cup F_0\cup F_n$ for each $n\in\en$, because it coincides with $f_n$. Moreover, the range of $f$ is (contained in) $\{a,\dots,a+N\}$, so $f$ is $N$-Lipschitz. If $f$ is not $1$-Lipschitz, the Lipschitz constant is realized on pairs $(x,y)$, where $x\in F_m$, $y\in F_n$ for some $1\le m<n$. If $f$ is $3$-Lipschitz, we simply extend $f$ to a $3$-Lipschitz function $g$. So, assume $f$ is not $3$-Lipschitz. We will show how to pass to a subsequence and modify $f$ to a $3$-Lipschitz function.

To this end we set
$$A=\left\{(u,v,w)\setsep u,v\in\{a,\dots,a+N\}, w\in\{1,\dots,N\}, \frac{\abs{u-v}}{w}>3\right\}.$$

For $m,n\in \en$, $m<n$ and $\x=(u,v,w)\in A$ set
$$U_{m,n,\x}=\{x\in F_m\setsep  f_m(x)=u\ \&\ \exists y\in F_n\colon d(x,y)=w,f_n(y)=v\}. $$
 Since $F_m$ is finite, there are only finitely many possibilities for $U_{m,n,\x}$. Thus we may perform the following construction:
\begin{itemize}
    \item $P_1=\en$.
    \item Given $P_k$, set $m_k=\min P_k$.
    \item Find $U_{k,\x}\subset F_{m_k}$ for each $\x\in A$ and an infinite set $P_{k+1}\subset P_k\setminus\{m_k\}$ such that $U_{m_k,n,\x}=U_{k,\x}$ for each $n\in P_{k+1}$ and each $\x\in A$.
\end{itemize}

So, passing to a subsequence we may assume that $U_{m,n,\x}=U_{m,\x}$ whenever $n>m$ and $\x\in A$. 
Up to passing to a further subsequence we may assume that, given $\x\in A$ either all sets $U_{m,\x}$ are empty
or all sets $U_{m,\x}$ are nonempty. Let $\widetilde{A}$ be the subset of $A$ formed by those $\x$ for which sets $U_{m,\x}$ are nonempty.

If $\widetilde{A}=\emptyset$, then $f$ is $3$-Lipschitz, hence we let $g$ to be a $3$-Lipschitz extension of $f$.

Next assume that $\widetilde{A}\ne\emptyset$.
For $m,n\in\en$, $m<n$ and $\x=(u,v,w)\in \widetilde{A}$ set
$$V_{m,n,\x}=\{y\in F_n\setsep \exists x\in U_{m,\x}\colon d(x,y)=w, f_n(y)=v\}.$$
By construction this set is always nonempty.
We claim that
\begin{equation}\label{eq:disjointness}
 V_{m_1,n,\x}\cap V_{m_2,n,\x}= \emptyset \mbox{ whenever } m_1<m_2<n,\x\in\widetilde{A}.  
\end{equation}
Indeed, assume $m_1<m_2<n$, $\x=(u,v,w)\in\widetilde{A}$ and $z\in V_{m_1,n,\x}\cap V_{m_2,n,\x}$.
Then there are $x_1\in U_{m_1,\x}$ and $x_2\in U_{m_2,\x}$ such that $d(x_1,z)=d(x_2,z)=w$. Moreover, fix
$y\in F_{m_2}$ such that $d(x_1,y)=w$ and $f_{m_2}(y)=v$. (Such $y$ does exist as $x_1\in U_{m_1,\x}$.) Then
$$d(x_2,y)\le d(x_2,z)+d(z,x_1)+d(x_1,y)=3w.$$
Further, $f_{m_2}(x_2)=u$, $f_{m_2}(y)=v$ and hence
$$\abs{u-v}=\abs{f_{m_2}(x_2)-f_{m_2}(y)}\le d(x_2,y)\le 3w,$$
so 
$$\frac{\abs{u-v}}{w}\le3,$$
a contradiction with $(u,v,w)\in\widetilde{A}\subset A$.

Let $\gamma_n=\sum_{x\in F_n}\alpha_x\delta(x)$. 
By Lemma~\ref{l:izomorfismus} we know that 
$$\forall n\in\en:\quad \sum_{x\in F_n}\abs{\alpha_x}\le 2(c^\prime+1).$$
For $m<n$ and $\x\in\widetilde{A}$ set
$$c(m,n,\x)=\sum_{x\in V_{m,n,\x}} \abs{\alpha_x}.$$
Note that $c(m,n,\x)\le 2(c^\prime+1)$ for any $m,n,\x$, so
 up to passing to a subsequence we may assume that $c(m,n,\x)\to c(m,\x)$ for each $m$ and each $\x\in\widetilde{A}$.

Further, for any $\x\in\widetilde{A}$, using \eqref{eq:disjointness} we obtain that the sequence 
$$(\sum_{m<n} c(m,n,\x))_n$$
is also bounded by $ 2(c^\prime+1)$. We deduce that for every $k\in\en$ we have
\[
\sum_{m=1}^k c(m,\x) = \sum_{m=1}^k \lim_{n\to\infty} c(m,n,\x) = \lim_{n\to\infty} \sum_{m=1}^{k} c(m,n,\x)\leq  2(c^\prime+1)\]
so $\sum_{m=1}^\infty c(m,\x)\leq  2(c^\prime+1)< \infty$ every $\x\in\widetilde{A}$.

Fix $\varepsilon>0$ small enough (to be specified later). Let $m_0$ be such that 
$$\sum_{m\ge m_0}c(m,\x)<\varepsilon \mbox{ for each }\x\in\widetilde{A}.$$
We define an increasing sequence $(k_j)$ of natural numbers as follows.
Set $k_1=m_0$. Given $k_j$, find $k_{j+1}>k_{j}$ such that $c(k_i,n,\x)<c(k_i,\x)+\frac{\varepsilon}{2^{j+1}}$ for $\x\in\widetilde{A}$, $i\le j$ and $n\ge k_{j+1}$.

Set
$$H=\{0\}\cup F_0 \cup \bigcup_{j=1}^\infty F_{k_j} \setminus \bigcup_{i<j,\x\in\widetilde{A}}V_{k_i,k_j,\x}.$$
Then $f|_H$ is $3$-Lipschitz. So, there is a $3$-Lipschitz function $g$ on $M$ extending $f|_H$. Then
$$\begin{aligned}
    \ip{g}{\gamma_0+\gamma_{k_l}}&=\norm{\gamma_0+\gamma_{k_l}}+ \ip{g-f}{\gamma_0+\gamma_{k_l}}
    \\&\ge \norm{\gamma_0+\gamma_{k_l}} - \sum_{\x\in \widetilde{A}}\sum_{j<l}\sum_{x\in V(k_j,k_l,\x)}\abs{\alpha_x}\abs{f(x)-g(x)}
     \\&\ge \norm{\gamma_0+\gamma_{k_l}} - 4N\sum_{\x\in \widetilde{A}}\sum_{j<l}\sum_{x\in V(k_j,k_l,\x)}\abs{\alpha_x}
   \\& =\norm{\gamma_0+\gamma_{k_l}} -4N \sum_{\x\in \widetilde{A}} \sum_{j<l} c(k_j,k_l,\x) 
   \\& > \norm{\gamma_0+\gamma_{k_l}} - 4N \sum_{\x\in \widetilde{A}}\sum_{j<l} (c(k_j,\x)+\tfrac{\varepsilon}{2^l})\\& >  \norm{\gamma_0+\gamma_{k_l}} - 4N \sum_{\x\in \widetilde{A}}\varepsilon(1+\tfrac{l}{2^l})
   \\&\ge  \norm{\gamma_0+\gamma_{k_l}} - 4N(N+1)^3\varepsilon(1+\tfrac{l}{2^l})
\end{aligned}$$
Here, the first equality follows from the choice of $f$. The next inequality follows from the fact that $f=g$ on $H$ and the triangle inequality. In the following one we use that $\abs{f}\le N$ and $\abs{g}\le 3N$ (as $g$ is $3$-Lipschitz and $g(0)=0$). The next equality follows from the definition of $c(m,n,\x)$. The following inequality follows from the construction of $(k_j)$ and the next one follows from the choice of $k_1=m_0$. The last inequality follows from the fact that cardinality of $\widetilde{A}$ is at most $(N+1)^3$.

Hence $$\limsup\ip{g}{\gamma_0+\gamma_n}\ge\liminf \ip{g}{\gamma_0+\gamma_{k_l}}\ge c^\prime-4N(N+1)^3\varepsilon>c$$
if $\varepsilon$ is small enough. This completes the proof.
\end{proof}

The second step is contained in the following lemma, which enables us to reduce the situation to the setting of the preceding lemma.

\begin{lemma}\label{L:redukce}
    Let $N\in\en$. Assume $(M,d)$ is a pointed metric space such that $d$ attains only values $0,1,\dots,N$.
     Assume $(\mu_n)$ is a bounded sequence in $\F(M)$ such that $\limsup_n \norm{\mu_n}>c>0$. Then there is a $3$-Lipschitz function $g\in\Lip_0(M)$ such that  $\limsup_n \ip{g}{\mu_n}>c$.
\end{lemma}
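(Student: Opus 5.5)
Since $d$ takes only the values $0,1,\dots,N$, the space $M$ is bounded and $1$-separated. By Lemma~\ref{l:izomorfismus} (with $a=1$, $b=N$), $\F(M)$ is isomorphic to $\ell_1(M\setminus\{0\})$, and the coefficient $\ell_1$-norm of any element is at most twice its norm. The plan is a gliding-hump reduction to the setting of Lemma~\ref{L:pripaddisj}.

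\emph{Step 1: extract a pointwise limit.} Write $\mu_n=\sum_{x}\alpha^n_x\delta(x)$ with $\sum_x\abs{\alpha^n_x}\le 2\sup_k\norm{\mu_k}$, and assume wlog that $\norm{\mu_n}\to c'>c$. The support of $(\mu_n)$ is countable. By a diagonal argument we pass to a subsequence along which $\alpha^n_x\to\alpha_x$ for each $x$. Fatou's lemma gives $\sum_x\abs{\alpha_x}<\infty$, so $\mu=\sum_x\alpha_x\delta(x)$ defines an element of $\F(M)$. Put $\nu_n=\mu_n-\mu$; then $\nu_n\to0$ coordinatewise.

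\emph{Step 2: approximate by disjointly supported blocks.} This is the standard gliding-hump step in $\ell_1$. Fix $\varepsilon>0$ and choose a finite set $F_0$ with $\norm{\mu-\gamma_0}<\varepsilon$, where $\gamma_0=\sum_{x\in F_0}\alpha_x\delta(x)$. Inductively we choose indices $n_1<n_2<\dots$ and pairwise disjoint finite sets $F_k$, disjoint from $F_0$, such that, with $\gamma_k=\sum_{x\in F_k}\alpha^{n_k}_x\delta(x)$,
\[
\norm{\nu_{n_k}-\gamma_k}<\varepsilon\quad\text{and}\quad \sum_{x\in F_0\cup\dots\cup F_{k-1}}\abs{\alpha^{n_k}_x-\alpha_x}<\varepsilon .
\]
This is possible because the coordinates converge on the finite set already used and each $\nu_n$ has a finite tail. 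Norms are dominated by $N$ times the coefficient $\ell_1$-norm, so the errors are at most of order $N\varepsilon$, and we get $\norm{\mu_{n_k}-(\gamma_0+\gamma_k)}\le C N\varepsilon$. With $\varepsilon$ small this gives $\limsup_k\norm{\gamma_0+\gamma_k}>c+\eta$ for some $\eta>0$ with $3CN\varepsilon<\eta$. The sequence $(\gamma_k)$ is bounded, and each $\gamma_k$ is supported on the finite set $F_k$.

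\emph{Step 3: apply Lemma~\ref{L:pripaddisj}.} The lemma applied with the constant $c+\eta$ yields a $3$-Lipschitz $g\in\Lip_0(M)$ with $\limsup_k\ip{g}{\gamma_0+\gamma_k}>c+\eta$. Since $\norm{g}\le3$, we have $\abs{\ip{g}{\mu_{n_k}}-\ip{g}{\gamma_0+\gamma_k}}\le 3CN\varepsilon<\eta$. Hence $\limsup_n\ip{g}{\mu_n}\ge\limsup_k\ip{g}{\mu_{n_k}}>c$.

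The only real obstacle is the bookkeeping in Step 2. The $\varepsilon$ has to be chosen after $c'$ is fixed, and before invoking Lemma~\ref{L:pripaddisj}, so that the final loss $3CN\varepsilon$ stays below the gap $\eta$ obtained from $c'-c$. The factor $3$ from $\norm{g}\le 3$ makes this choice straightforward.
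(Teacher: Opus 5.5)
Your proof is correct and follows essentially the same route as the paper. You take a coordinatewise (weak$^*$) limit $\mu$ and run a gliding hump to get $\gamma_0=\mu|_{F_0}$ and $\gamma_k=\mu_{n_k}|_{F_k}$ with $\norm{\mu_{n_k}-(\gamma_0+\gamma_k)}=O(N\varepsilon)$. You then apply Lemma~\ref{L:pripaddisj} and absorb the error using $\norm{g}\le 3$.

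The only cosmetic differences are these. You obtain $\mu$ by a diagonal argument plus Fatou rather than by weak$^*$ compactness. Also, your intermediate bound $\norm{\nu_{n_k}-\gamma_k}<\varepsilon$ holds only up to a factor of order $N$, because of the term $-\sum_{x\notin F_0\cup\dots\cup F_{k-1}}\alpha_x\delta(x)$. Your final constant $CN\varepsilon$ already accounts for this.
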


\begin{proof} Let $T:\F(M)\to \ell^1(M\setminus\{0\})$ be the isomorphism provided by Lemma~\ref{l:izomorfismus}. Note that
$T(\delta(x))=e_x$ for $x\in M\setminus\{0\}$. 
 Using the identification via $T$, we transfer from $\ell_1(M\setminus\{0\})$ to $\F(M)$ the notions of a support, $w^*$-convergence and restriction to a subset of $M$. We recall that $w^*$-topology on bounded subsets of $\ell_1$ coincides with the topology of pointwise convergence.

Let $(\mu_n)$ and $c$ be as in the statement.  Up to passing to a subsequence we may assume that $\lim\norm{\mu_n}=c^\prime\in (c,\infty)$. Up to passing to a further subsequence we may assume that $\mu_n\stackrel{w^*}{\longrightarrow} \mu$ for some $\mu\in \F(M)$ with $\|\mu\|\leq c^\prime$. Fix $\varepsilon>0$ such that $c^\prime-9\varepsilon>c$. Pick a finite $F_0\subset M\setminus\{0\}$ with $\|\mu|_{M\setminus F_0}\| < \varepsilon$. Since $\mu_n\to\mu$ pointwise and $F_0$ is finite, up to omitting a finite number of elements from the sequence, we may assume that $\|(\mu_n-\mu)|_{F_0}\|<\varepsilon$ for $n\in\en$.

Now, we shall show that passing to a subsequence we may assume there are pairwise disjoint finite sets $F_n\subset M\setminus F_0$ such that 
\begin{equation}\label{eq:glidingBumb}
\|\mu_n|_{M\setminus(F_0\cup F_n)}\| < \varepsilon,\quad n\in\en.
\end{equation}
Indeed, we start by putting $n_1:=1$ and picking finite $F_1\subset M\setminus F_0$ satisfying $\|\mu_1|_{M\setminus F}\| < \varepsilon$ whenever $F\supset F_0\cup F_1$. For the inductive step, assume $n_j\in \en$ and $F_j\subset M$ for $j\leq k$ were defined. Since $\|\mu|_{M\setminus F_0}\| < \varepsilon$ and $\mu_n\to \mu$ pointwise, we pick $n_{k+1}>n_k$ such that for every $n\geq n_{k+1}$ we have $\|\mu_n|_{\bigcup_{i=1}^kF_i}\| < \tfrac{\varepsilon}{2}$. Next, we pick $F_{k+1}\subset M\setminus (F_0\cup\ldots\cup F_k)$ such that for every $F\supset \bigcup_{i=1}^{k+1} F_i$ we have $\|\mu_{n_{k+1}}|_{M\setminus F}\| < \tfrac{\varepsilon}{2}$. Then $\|\mu_{n_{k+1}}|_{M\setminus (F_0\cup F_{k+1})}\| < \varepsilon$. This finishes the inductive construction.

Denote $\gamma_0:=\mu|_{F_0}$ and $\gamma_n:=\mu_n|_{F_n}$ for $n\in\en$, by the above those are points in $\F(M)$ with finite disjoint supports. Further, we have
\[
\|\gamma_0+\gamma_n\|\geq \|\gamma_0 + \mu_n|_{M\setminus F_0}\| - \varepsilon > \|\mu_n|_{F_0} + \mu_n|_{M\setminus F_0}\| - 2\varepsilon = \|\mu_n\| - 2\varepsilon.
\]
Consequently, $\limsup_n \norm{\gamma_0+\gamma_n}\ge c^\prime-2\varepsilon$. By Lemma~\ref{L:pripaddisj} there is a $3$-Lipschitz function $g\in\Lip_0(M)$ with $\limsup\ip{g}{\gamma_0+\gamma_n}>c^\prime-3\varepsilon$.
For every $n\in\en$ we have
\[\begin{split}
\ip{g}{\mu_n} & = \ip{g}{\mu|_{F_0}} + \ip{g}{\mu_n|_{F_n}} +\ip{g}{(\mu_n-\mu)|_{F_0}}+ \ip{g}{\mu_n|_{M\setminus (F_0\cup F_n)}}\\
& \geq \ip{g}{\gamma_0+\gamma_n}-3\norm{(\mu_n-\mu)|_{F_0}}-3\norm{\mu_n|_{M\setminus (F_0\cup F_n)}} \\
& >\ip{g}{\gamma_0+\gamma_n}-6\varepsilon.
\end{split}\]
Hence
$$\limsup\ip{g}{\mu_n}\ge \limsup\ip{g}{\gamma_0+\gamma_n}-6\varepsilon>c^\prime-9\varepsilon>c.$$
This completes the proof.
\end{proof}

Now we are ready to complete the proof of the main theorem.

\begin{proof}[Proof of Theorem~\ref{T:main}.]
Assume that $M$ is a uniformly discrete metric space. We want to prove that $\F(M)$ is $3$-Schur. By Lemma~\ref{l:bounded} we may assume that $M$ is bounded. By Lemma~\ref{l:integer} we may additionally assume
that the metric on $M$ attains only integer values. In this case $M$ satisfies the assumptions of Lemma~\ref{L:redukce}, which shows that condition from Lemma~\ref{l:c-schur-char} is satisfied with $c=3$.
Hence $\F(M)$ is $3$-Schur.

Optimality of constant $3$ follows from \cite[Example 8.5]{qschur-rim25}
\end{proof}

\section{Proofs of Theorem~\ref{t:0hyp} and its corollary}\label{sec:0hyp}

Theorem~\ref{t:0hyp} follows essentially by a combination of known results, it follows immediately from a formally stronger Theorem~\ref{thm:0HypAndp1u} below.

Before proceeding to the proofs, let us recall that a metric space is $0$-hyperbolic if and only if it embeds into a an $\mathbb{R}$-tree; see \cite[Theorems 3.38]{Evans08}. Rather than recalling the full definition of an $\er$-tree, which requires additional notions, we note the following convenient characterization: a metric space $(T,d)$ is an $\er$-tree if and only if it is $0$-hyperbolic and connected, see \cite[Theorem 3.40]{Evans08}.

Assume now that $(T,d)$ is an $\er$-tree. Then for any $x,y\in T$ there is a unique isometry $\phi_{x,y}:[0,d(x,y)]\to T$ with $\phi_{x,y}(0)=x$ and $\phi_{x,y}(y)=d(x,y)$, see \cite[Lemma 3.20]{Evans08}. Finally, given $A\subset T$, we say $A$ has length measure zero if $\lambda(\phi_{x,y}^{-1}(A))=0$ for every $x,y\in T$. We refer the interested reader to \cite{Evans08} for more details concerning $\mathbb{R}$-trees and to \cite{God10} for more details concerning the length measure on $\mathbb{R}$-trees and results related to Lipschitz-free spaces in this context.

\begin{thm}\label{thm:0HypAndp1u}
    Let $M$ be a separable complete metric space. Then the following conditions are equivalent.
    \begin{enumerate}[label=(\alph*)]
        \item\label{it:almostInEll1} $\F(M)$ is $(1+\varepsilon)$-isomorphic to a subspace of $\ell_1$ for every $\varepsilon>0$.
        \item\label{it:metricAlmostInEll1} $M$ is $0$-hyperbolic and purely 1-unrectifiable.
    \end{enumerate}
Consequently, if $M$ is complete $0$-hyperbolic and purely 1-unrectifiable (and not neccessarily separable), then $\F(M)$ is $1$-Schur.
\end{thm}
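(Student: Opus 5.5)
The plan is to obtain both implications from known structural results, and then deduce the $1$-Schur consequence from separable reduction together with the fact that $\ell_1$ is $1$-Schur.

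For \ref{it:metricAlmostInEll1}$\Rightarrow$\ref{it:almostInEll1}: since $M$ is $0$-hyperbolic, \cite[Theorem 3.38]{Evans08} embeds $M$ isometrically into an $\er$-tree $T$. Replacing $T$ by the closure of the union of the geodesic segments $\phi_{x,y}$ with $x,y$ in a countable dense subset of $M$, we may assume $T$ is separable and complete. Godard's theorem \cite{God10} gives $\F(T)\equiv L_1(\mu_T)$ isometrically, where $\mu_T$ is the length measure on $T$ (plus possibly an atomic part coming from the endpoints). The subspace $\F(M)\subset\F(T)$ then consists of $L_1$-functions supported on the union of the segments between points of $M$.

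The key point is that pure $1$-unrectifiability of $M$ forces the $\lambda$-measure of $\phi_{x,y}^{-1}(M)$ to be zero for all $x,y\in M$. Otherwise a compact positive-measure subset of $[0,d(x,y)]$ would be mapped isometrically into $M$. From this I expect to show, following Godard's argument that $\F(A)$ is isometric to a subspace of $\ell_1$ when $A\subset T$ has length measure zero, that $\F(M)$ is in fact isometric to a subspace of $L_1$. Each molecule $\delta(x)-\delta(y)$ corresponds to the indicator of the segment $[x,y]$, and since $M$ misses almost all of $[x,y]$, these segments decompose along the complementary gaps. I would then like to realize $\F(M)$ isometrically inside an $\ell_1$-sum of spaces $L_1$ over countably many "gap" intervals. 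However, the indicators of intervals with endpoints in $M$ may still span a genuine $L_1$ piece, because the gaps can accumulate. This is the main obstacle. My first attempt will be to use the known result that $\F(M)$ has the Schur property (by \cite{p1u}) and is isometric to a subspace of $L_1$. A subspace of $L_1$ with the Schur property, or more precisely one with the Radon--Nikodým property that is spanned by such indicator functions, should then be $(1+\varepsilon)$-embeddable into $\ell_1$. Here I would invoke the known fact (Godefroy–Kalton–Li type) that subspaces of $L_1$ with the Schur/strong Schur property almost isometrically embed into $\ell_1$.

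For \ref{it:almostInEll1}$\Rightarrow$\ref{it:metricAlmostInEll1}: if $\F(M)$ almost isometrically embeds into $\ell_1$, then $\F(M)$ is Schur, hence $M$ is purely $1$-unrectifiable by \cite[Corollary 1.12]{p1u}. For $0$-hyperbolicity, take four points $x,y,z,u$. Then $\F(\{x,y,z,u\})$ is $(1+\varepsilon)$-isomorphic to a subspace of $\ell_1$ for every $\varepsilon$, and by compactness of the relevant finite-dimensional Banach–Mazur setting it embeds isometrically into $\ell_1$, i.e. into $L_1$. The known characterization of finite metric spaces whose free space embeds isometrically into $L_1$ (they must be tree metrics, i.e. satisfy the four-point condition) then yields the four-point inequality.

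Finally, for the consequence: $1$-Schur depends only on countable data, since any sequence lives in $\F(S)$ for a separable closed $S\subset M$. Such an $S$ is again $0$-hyperbolic and purely $1$-unrectifiable, so $\F(S)$ is $(1+\varepsilon)$-isomorphic to a subspace of $\ell_1$. Since $\ell_1$ is $1$-Schur, this gives $\ca{x_n}\le(1+\varepsilon)^2\de{x_n}$, and letting $\varepsilon\to0$ gives the claim.
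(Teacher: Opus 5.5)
Your argument for (b)$\Rightarrow$(a) has a genuine gap. You correctly get that $M$ has length measure zero in the tree (the same argument works for all $x,y\in T$, since $M$ is closed in $T$). You also correctly get that $\F(M)$ sits isometrically in $L_1$ and is Schur. But the ``Godefroy--Kalton--Li type'' fact you then invoke is false as stated. Their characterization says that a subspace of $L_1$ embeds almost isometrically into $\ell_1$ iff it has the \emph{$1$-strong Schur property}.

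The plain Schur property is far from enough. Bourgain and Rosenthal constructed Schur subspaces of $L_1$ that fail the RNP, so they do not even embed isomorphically into $\ell_1$. A strong Schur property with constant $>1$ is not enough either, because (a) itself forces $\F(M)$ to be $1$-Schur, hence $1$-strong Schur. Adding the RNP, or the fact that $\F(M)$ is spanned by indicators of segments, remains a hope in your text, not an argument.

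Nor can you import the $1$-strong Schur property from \cite{p1u}: for non-proper $M$ that paper gives Schur, RNP and related properties, not the $1$-strong Schur property. If ``purely $1$-unrectifiable $\Rightarrow$ $1$-strong Schur'' were known in general, Theorem~\ref{T:main} would follow at once from part (c) of the Proposition in Section~\ref{sec:qschur}. So the step you call ``the main obstacle'' is the entire difficulty, and your plan leaves it open. That obstacle is branching points accumulating on a set of positive length, which is exactly what rules out an isometric embedding into $\ell_1$.

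The paper does not reprove this. It quotes \cite[Theorem 1.2]{APP21}: for separable complete $M$, (a) holds iff $M$ embeds into an $\er$-tree as a set of length measure zero. With that in hand, (b)$\Rightarrow$(a) is just your measure-zero observation. For (a)$\Rightarrow$(b), the tree embedding gives $0$-hyperbolicity, and the Schur property gives pure $1$-unrectifiability via \cite{p1u}.

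Your alternative route to $0$-hyperbolicity, via four-point subspaces and Godard's isometric $L_1$-characterization of tree metrics, does work with one correction. A Banach--Mazur limit of subspaces of $\ell_1$ embeds isometrically into $L_1$, but not necessarily into $\ell_1$ (for example $\ell_2^2$). So ``into $\ell_1$, i.e.\ into $L_1$'' should read ``into $L_1$''. The separable reduction for the ``Consequently'' part is fine.
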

\begin{proof}The proof follow from two results proved elsewhere. First, by \cite[Theorem 1.2]{APP21}, condition \ref{it:almostInEll1} holds if and only if $M$ embeds into an $\er$-tree as a set of length measure zero. Second, by \cite[Theorem C]{p1u} $\F(M)$ is Schur if and only if $M$ is purely $1$-unrectifiable. Knowing those two results, the proof is almost immediate.

Indeed, if \ref{it:almostInEll1} holds then $\F(M)$ is Schur, because it isomorphically embeds into $\ell_1$. Hence, $M$ is purely $1$-unrectifiable. Moreover, $M$ is $0$-hyperbolic because it embeds into an $\er$-tree. Hence, \ref{it:metricAlmostInEll1} holds.

Conversely, if \ref{it:metricAlmostInEll1} holds then, since $M$ is $0$-hyperbolic, it embeds into an $\er$-tree and if $(T,d)$ is an $\er$-tree and $M\subset T$ is purely 1-unrectifiable, then it has lenght measure zero as otherwise there would be $x,y\in T$ with $\lambda(\phi_{x,y}^{-1}(M))>0$, so there would exist a compact set $K\subset \phi_{x,y}^{-1}(M)\subset [0,d(x,y)]$ of positive measure and $\phi_{x,y}(K)\subset M$ would be an isometric copy of a compact subset of the real line with positive measure, contradicting that $M$ is purely 1-unrectifiable. Hence, $M$ embeds into an $\er$-tree as a set of length measure zero, so \ref{it:almostInEll1} holds.

For the ``Consequently'' part, assume $M$ is complete $0$-hyperbolic and purely 1-unrectifiable. Since $1$-Schur property can be obviously tested on separable subspaces and since for any separable subspace $Y\subset \F(M)$ we can find $N\subset M$ complete and separable such that $Y\subset \F(N)$, we may without loss of generality assume $M$ is separable. Then, by the already proven part we obtain that $\F(M)$ is $(1+\varepsilon)$-isomorphic to a subspace of $\ell_1$ for every $\varepsilon>0$, which implies that $\F(M)$ is $1$-Schur.
\end{proof}

A special case when the above applies is when $M$ is an ultrametric space. We are convinced it is well-known that ultrametric spaces are purely $1$-unrectifiable, but we were not able to find a reference with a suitable proof. For the convenience of the reader, we therefore include a proof below. We also refer to Remark~\ref{rem:ultraMPJinak} for an alternative approach to deriving Lemma~\ref{L:um-p1u} from existing results; however, this approach appears somewhat cumbersome.

\begin{lemma}\label{L:um-p1u}
    Any ultrametric space is purely $1$-unrectifiable.
\end{lemma}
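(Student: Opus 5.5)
Suppose $(M,d)$ is ultrametric and that, contrary to the claim, there is a compact $K\subset\er$ with $\lambda(K)>0$ and a bi-Lipschitz map $\phi\colon K\to M$. The plan is to show that any such $K$ carries no ultrametric compatible with its Euclidean metric up to bounded distortion. Put $\rho(s,t)=d(\phi(s),\phi(t))$ for $s,t\in K$. Then $\rho$ is an ultrametric on $K$ and satisfies $a\abs{s-t}\le\rho(s,t)\le b\abs{s-t}$ for some constants $0<a\le b$. This reduces the lemma to the following statement: \emph{no compact set of positive Lebesgue measure in $\er$ carries an ultrametric bi-Lipschitz equivalent to the Euclidean metric.}

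To find the contradiction, use a Lebesgue density point $t_0$ of $K$. Fix $h>0$ small, to be chosen later. For each $r>0$ small enough, most of the interval $[t_0,t_0+r]$ lies in $K$, up to a set of measure at most $hr$. Pick a large integer $k$ with $k>b/a$, which will be fixed in the last step, and set $h=1/(4k)$. Then inside $[t_0,t_0+r]$ choose points $s_0=t_0<s_1<\dots<s_k$ in $K$ with each gap $s_{i}-s_{i-1}$ at most $2r/k$ and $s_k\ge t_0+r/2$. This is possible because every subinterval of length $r/k$ meets $K$: a subinterval missing $K$ would have measure $r/k>hr$, which the density bound forbids.

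Now apply the ultrametric inequality inductively along this chain:
$$\rho(s_0,s_k)\le\max_i\rho(s_{i-1},s_i)\le b\cdot\frac{2r}{k}.$$
On the other hand, the lower bi-Lipschitz bound gives $\rho(s_0,s_k)\ge a\,(s_k-s_0)\ge ar/2$. Combining the two yields $ar/2\le 2br/k$, that is $k\le 4b/a$. Choosing $k>4b/a$ from the start therefore gives a contradiction, which proves the lemma.

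The only delicate step is the density argument that guarantees the chain of points in $K$ with uniformly small gaps. Compactness and measurability are used only to invoke the Lebesgue density theorem, so I do not expect a real obstacle there; one only needs to keep track of the choice of $h$ in terms of $k$. Alternatively, one could note that in an ultrametric space every Lipschitz curve is constant, but images of compact sets of positive measure are not curves, so the chaining argument above seems the most direct route.
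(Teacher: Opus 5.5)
Your proof is correct and follows essentially the same route as the paper. In both, the ultrametric is pulled back to $K$, an interval is found on which $K$ has relative density close to $1$, and the ultrametric inequality along a chain of points of $K$ from an equidistant partition forces $\rho$ of the endpoints to be small while their Euclidean distance stays comparable to the interval length. The paper merely gives an elementary proof of the needed density statement (its Step 1) instead of citing the Lebesgue density theorem. Your slip of first requiring $k>b/a$ and then $k>4b/a$ is harmless, as is the possible coincidence of chosen points at partition endpoints, which you avoid by picking them in open subintervals.
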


 \begin{proof}
 {\tt Step 1.} Assume that $K\subset\er$ is a compact set of positive measure. Given $\varepsilon>0$, there are real numbers $a<b$ such that $\lambda(K\cap [a,b])>(1-\varepsilon)(b-a)$.

\smallskip

This is an immediate consequence of Lebesgue density theorem. But in fact, this statement is elementary, so we provide an easy proof.

The assertion is obvious if $K$ contains an interval. So, assume it does not contain an interval, i.e., it is totally
disconnected. Set $\alpha=\min K$, $\beta=\max K$ and $m=\lambda(K)$. Then $\alpha<\beta$, in fact $\beta-\alpha>m$. 
$[\alpha,\beta]\setminus K$ is a nonempty open set, so it is a disjoint union of open intervals. This family of intervals must be countable (as we are in $\er$) and infinite (otherwise $K$ contains an interval). So, say
$$[\alpha,\beta]\setminus K=\bigcup_{n=1}^\infty (\alpha_n,\beta_n).$$
For $n\in\en$ define
$$\varepsilon_n=\lambda\Bigg([\alpha,\beta]\setminus \Big(K\cup \bigcup_{k=1}^n(\alpha_n,\beta_n)\Big)\Bigg)=\beta-\alpha-m-\sum_{k=1}^n(\beta_n-\alpha_n).$$
clearly $\varepsilon_n\to0$.
Further, set
$$A_n=[\alpha,\beta]\setminus \bigcup_{k=1}^n(\alpha_n,\beta_n).$$
Then $K\subset A_n$ and
$$\frac{\lambda(K)}{\lambda(A_n)}=\frac{m}{m+\varepsilon_n}.$$
Moreover, $A_n$ is a finite union of singletons and closed intervals. So, there is one of these intervals, say $[a,b]$ such that
$$\lambda(K\cap[a,b])\ge\frac{m}{m+\varepsilon_n}(b-a)>0.$$
Since $\frac{m}{m+\varepsilon_n}\nearrow 1$, the assertion follows.

\smallskip

 {\tt Step 2.} Assume that $K\subset\er$ is a compact set of positive measure. Let $d$ be a metric on $K$ such that $(K,d)$ is ultrametric and $d(x,y)\le \abs{x-y}$ for $x,y\in K$. Given $\varepsilon>0$ there are $x,y\in K$ such that $d(x,y)<\varepsilon\abs{x-y}$.

 \smallskip

 Fix $n\in\en$, $n\ge 3$. By Step 1 there are real numbers $a<b$ such that
 $\lambda(K\cap[a,b])>(1-\frac1n)(b-a)$. Let $a=t_0<t_1<\dots<t_n=b$ be the equidistant partition. By the choice of $a,b$ for each $j\in\{1,\dots,n\}$ there is some $x_j\in K\cap (t_{j-1},t_j)$. Then $d(x_j,x_{j+1})\le \abs{x_{j+1}-x_j}<\frac2n(b-a)$ for $j=1,\dots,n-1$. The ultrametric property yields $d(x_1,x_n)<\frac2n(b-a)$.
 On the other hand, $\abs{x_1-x_n}>(1-\frac2n)(b-a)$.
Thus, $$\frac{d(x_1,x_n)}{\abs{x_1-x_n}}<\frac{\frac2n}{1-\frac2n}=\frac{2}{n-2}.$$
Since $\frac{2}{n-2}\to 0$, the assertion follows.

\smallskip

{\tt Step 3.} Completion of the proof.

\smallskip

Assume $(M,d)$ is ultrametric which is not purely $1$-unrectifiable. Then there is a compact $K\subset \er$ of positive measure and a bi-Lipschitz embedding $f:K\to M$. 
For $x,y\in K$ define $\rho(x,y)=d(f(x),f(y))$. Then $(K,\rho)$ is ultrametric and $\rho$ is Lipschitz-equivalent to the euclidean metric. This contradicts Step 2.
 
 \end{proof}

 \begin{remark}\label{rem:ultraMPJinak}An alternative approach to deriving Lemma~\ref{L:um-p1u} from existing results is the following. Pick an ultrametric space $M$. Since completion of an ultrametric space is ultrametric, we may without loss of generality assume $M$ is complete. Moreover, obviously $M$ is purely $1$-unrectifiable if and only if each of its compact subsets is, so we may assume $M$ is compact. At this point, there are at least two ways to conclude. First, by \cite[Theorem 2]{CD16} the space $\F(M)$ is isomorphic to $\ell_1$, so it is Schur and $M$ is purely $1$-unrectifiable by \cite[Theorem D]{p1u}. Alternatively, one may apply \cite[Theorem 3.7]{Dal15} which asserts that $\F(M)$ is isometric to a dual space, and then invoke \cite[Theorem B]{p1u}.
 \end{remark}

\begin{proof}[Proof of Corollary~\ref{cor:ultrametric}.]
Let $M$ be ultrametric. Its completion is easily seen to be ultrametric as well, so without loss of generality $M$ is complete. It is clear that $M$ is $0$-hyperbolic. Further, by 
Lemma~\ref{L:um-p1u} $M$ is purely $1$-unrectifiable, so we may conclude by Theorem~\ref{t:0hyp}.    
\end{proof}
 
\def\cprime{$'$} \def\cprime{$'$}

\end{document}